\tikzstyle{vert}=[green]
\newcommand{\conf}{c}
\newcommand{\stackcs}{stack configurations\xspace}
\newcommand{\stackw}{stack word\xspace}
\newcommand{\stackws}{stack words\xspace}
\newcommand{\sortingw}{sorting word\xspace}
\newcommand{\sortingws}{sorting words\xspace}
\newcommand{\sortable}{$2$-stack sortable\xspace}
\newcommand{\rlm}{\ensuremath{\rho, \lambda, \mu}\xspace}
\newcommand{\rlmi}{\ensuremath{\rho_i, \lambda_i, \mu_i}\xspace}
\newcommand{\dec}[2]{\ensuremath{\hat{#1}^#2}}
\newcommand{\G}[1]{\ensuremath{{\mathcal G}^{(#1)}}}
\newtheorem{thm}{Theorem}
\newtheorem{defn}[thm]{Definition}
\newtheorem{lem}[thm]{Lemma}
\newcommand{\pushall}{ $2$-stack pushall sortable }
\newcommand{\gLabel}[1]{ \tikz[baseline=-3pt]\node[inner sep=1pt,circle,white,fill=black]{$#1$};}
\newcommand{\gConf}[2]{\ensuremath{ (#1,} \tikz[baseline=-3pt]\node[inner sep=1pt,circle,white,fill=black]{$#2$}; \ensuremath{)} }
\newcounter{indiceD}
\newcounter{tailleH}
\newcounter{tailleV}
\newcounter{nombrenoeud}
\newcommand{\ssConf}[2]{
\setcounter{tailleH}{0};
\setcounter{tailleV}{0};
\foreach \i in { #1 } {
\addtocounter{tailleH}{1};
	}
\foreach \i in {#2}{
\addtocounter{tailleV}{1};
	}
\draw (1.5, \thetailleH+.5) -- (1.5,0) -- (2.5,0) -- (2.5,\thetailleH+.5);
\draw (0, \thetailleV+.5) -- (0,0) -- (1,0) -- (1,\thetailleV+.5);
\setcounter{indiceD}{0}
\foreach \i in { #1 } {
\draw (2, \theindiceD+.5) node {\i};
\addtocounter{indiceD}{1};
}
\setcounter{indiceD}{0}
\foreach \i in { #2 } {
\draw (.5, \theindiceD+.5) node {\i};
\addtocounter{indiceD}{1};
}
}
\newcommand{\ssConfIR}[4]{
\setcounter{tailleH}{0};
\setcounter{tailleV}{0};
\foreach \i in { #1 } {
\addtocounter{tailleH}{1};
	}
\foreach \i in {#2}{
\addtocounter{tailleV}{1};
	}
\draw (1.5, \thetailleH+.5) -- (1.5,0) -- (2.5,0) -- (2.5,\thetailleH+.5);
\draw (0, \thetailleV+.5) -- (0,0) -- (1,0) -- (1,\thetailleV+.5);
\setcounter{indiceD}{0}
\foreach \i in { #1 } {
\draw (2, \theindiceD+.5) node {\i};
\addtocounter{indiceD}{1};
}
\setcounter{indiceD}{0}
\foreach \i in { #2 } {
\draw (.5, \theindiceD+.5) node {\i};
\addtocounter{indiceD}{1};
}
\draw (3,1) node[right] {${#3}$};
\draw (-0.5,1) node[left] {${#4}$};
}
\newcommand{\ssConfLab}[4]{
\setcounter{tailleH}{0};
\setcounter{tailleV}{0};
\foreach \i in { #1 } {
\addtocounter{tailleH}{1};
	}
\foreach \i in {#2}{
\addtocounter{tailleV}{1};
	}
\draw (0,0) node (#3) [draw,ellipse,matrix,inner sep=1pt] {
\scriptsize
\begin{scope}[scale=.3]
\draw (2.5,0) node {};
\draw (2.5,\thetailleH+.5) node  {};
\draw (0,0) node {};
\draw (0,\thetailleV+.5) node {};
 
\draw (1.5, \thetailleH+.5) -- (1.5,0) -- (2.5,0) -- (2.5,\thetailleH+.5);
\draw (0, \thetailleV+.5) -- (0,0) -- (1,0) -- (1,\thetailleV+.5);
\setcounter{indiceD}{0}
\foreach \i in { #1 } {
\draw (2, \theindiceD+.5) node {\i};
\addtocounter{indiceD}{1};
}
\setcounter{indiceD}{0}
\foreach \i in { #2 } {
\draw (.5, \theindiceD+.5) node {\i};
\addtocounter{indiceD}{1};
}
\draw (3.2,0.4) node[white, fill=black] {$#4$};
\end{scope}
\\};
}
\title{2-Stack  Sorting is polynomial \footnote{This work was completed with the support of the ANR
   project ANR BLAN-0204\_07  MAGNUM}}
\author{Adeline Pierrot \and Dominique Rossin}
\begin{document}
\maketitle

In this article, we give a polynomial algorithm to decide whether a given permutation $\sigma$ is sortable with two stacks in series. 
This  is indeed a longstanding open problem which was first introduced by Knuth in \cite{Knuth73}. 
He introduced the stack sorting problem as well as permutation patterns which arises naturally when characterizing permutations that can be sorted with one stack. 
When several stacks in series are considered, few results are known. 
There are two main different problems. 
The first one is the complexity of deciding if a permutation is sortable or not, the second one being the characterization and the enumeration of those sortable permutations.  
We hereby prove that the first problem lies in P by giving a polynomial algorithm to solve it.
This article strongly relies on \cite{PR13} in which $2$-stack pushall sorting is defined and studied.


\section{Notations and definitions}

Let $I$ be a set of integers. A permutation of $I$ is a bijection from $I$ onto $I$.
We write a permutation $\sigma$ of $I$ as the word $\sigma = \sigma_{1} \sigma_{2} \ldots \sigma_{n}$ where $\sigma_i = \sigma(i_1)$ with $I=\{i_1 \dots i_n\}$ and $i_1<i_2< \dots <i_n$.
The size of the permutation is the integer $n$ and if not precised, $I = [1 \dots n]$.
Notice that given the word $\sigma_{1} \sigma_{2} \ldots \sigma_{n}$ we can deduce the set $I$ and the map $\sigma$.
For any subset $J$ of $I$, $\sigma_{|J}$ denotes the permutation obtained by restricting $\sigma$ to $J$.
In particular the word corresponding to $\sigma_{|J}$ is a subword of the word corresponding to $\sigma$.

Let's recall the problem of sorting with two stacks in series. 
Given two stacks $H$ and $V$ in series --as shown in Figure \ref{fig:rholambdamu}-- and a permutation $\sigma$, we want to sort elements of $\sigma$ using the stacks. 
We write $\sigma$ as the word $\sigma = \sigma_{1} \sigma_{2} \ldots \sigma_{n}$ with $\sigma_{i} = \sigma(i)$ and take it as input. 
Then we have three different operations:
\begin{itemize}
\item $\rho$ which consist in pushing the next element of $\sigma$ on the top of $H$.
\item $\lambda$ which transfer the topmost element of $H$ on the top of $V$.
\item $\mu$ which pop the topmost element of $V$ and write it in the output.
\end{itemize}

\begin{figure}[H]
\begin{center}
\begin{tikzpicture}
\draw[very thick] (0,2) -- (0,0) -- (1,0) -- (1,2);
\draw (2.5,-0.5) node {$H$};
\draw[very thick] (2,2) -- (2,0) -- (3,0) -- (3,2);
\draw (0.5,-0.5) node {$V$};
\draw[very thick, dashed, ->] (4,2.5) -- node[above] {$\rho$} (2.66,2.5) -- (2.66,1.7);
\draw (4.8,2.5) node {{\small INPUT}};
\draw[very thick,dashed, ->] (2.33,1.7) -- (2.33,2.5) -- node[above] {$\lambda$} (0.66,2.5) -- (0.66,1.7);
\draw[very thick, dashed, <-] (-1,2.5) -- node[above] {$\mu$} (0.33,2.5) -- (0.33,1.7);
\draw (-2.5,2.5) node {{\small OUTPUT}};
\end{tikzpicture}
\caption{Sorting with two stacks in serie \label{fig:rholambdamu}}
\end{center}
\end{figure}

If there exists a sequence $w = w_{1} \ldots w_{k}$ of operations \rlm that leads to the identity in the output, we say that the permutation $\sigma$ is \sortable. 
In that case, we define the sorting word associated to this sorting process as the word $w$ on the alphabet $\{\rlm\}$. 
Notice that necessarily $w$ has $n$ times each letter $\rho, \lambda$ and $\mu$ and $k = 3n$. 
For example, permutation $2431$ is sortable using the following process. 

\vspace{0.2 cm}

{\scriptsize
\begin{tabular}{|c|c|c|c|c|c|c|}
\begin{tikzpicture}[scale=.3]
\ssConfIR{2}{}{4\,3\,1}{}
\end{tikzpicture}&
\begin{tikzpicture}[scale=.3]
\ssConfIR{2,4}{}{3\,1}{}
\end{tikzpicture}&
\begin{tikzpicture}[scale=.3]
\ssConfIR{2}{4}{3\,1}{}
\end{tikzpicture}&
\begin{tikzpicture}[scale=.3]
\ssConfIR{2,3}{4}{1}{}
\end{tikzpicture}&
\begin{tikzpicture}[scale=.3]
\ssConfIR{2}{4,3}{1}{}
\end{tikzpicture}&
\begin{tikzpicture}[scale=.3]
\ssConfIR{2,1}{4,3}{}{}
\end{tikzpicture}\\
\hline
\begin{tikzpicture}[scale=.3]
\ssConfIR{2}{4,3,1}{}{}
\end{tikzpicture}&
\begin{tikzpicture}[scale=.3]
\ssConfIR{2}{4,3}{}{1}
\end{tikzpicture}&
\begin{tikzpicture}[scale=.3]
\ssConfIR{}{4,3,2}{}{1}
\end{tikzpicture}&
\begin{tikzpicture}[scale=.3]
\ssConfIR{}{4,3}{}{1\,2}
\end{tikzpicture}&
\begin{tikzpicture}[scale=.3]
\ssConfIR{}{4}{}{1\,2\,3}
\end{tikzpicture}&
\begin{tikzpicture}[scale=.3]
\ssConfIR{}{}{}{1\,2\,3\,4}
\end{tikzpicture}
\end{tabular}
}

\vspace{0.2 cm}

This sorting is encoded by the word $w = \rho \rho \lambda \rho \lambda \rho \lambda \mu \lambda \mu \mu \mu$. 
We can also decorate the word to specify the element on which the operation is performed. 
The {\em decorated word} for $w$ and $2431$ is 
$\hat{w} =  \rho_{2} \rho_{4} \lambda_{4} \rho_{3} \lambda_{3} \rho_{1} \lambda_{1} \mu_{1} \lambda_{2} \mu_{2} \mu_{3} \mu_{4}$. 
Note that we have the same information between $(\sigma, w)$ and $\hat{w}$. 
Nevertheless, in a decorated word appears only once each letter $\rho_{i}, \lambda_{i}$ or $\mu_{i}$.
The decorated word associated to $(\sigma, w)$ is denoted $\hat{w}^\sigma$.

Of course not all permutations are sortable. 
The smallest non-sortable ones are of size $7$, for instance $\sigma=2 4 3 5 7 6 1$.

When only one stack is considered, there exists a natural algorithm to decide whether a permutation is sortable or not. 
Indeed, there is a unique way to sort a permutation using only one stack, and a greedy algorithm gives a decision procedure. 
For two stacks in series, a permutation can be sorted in numerous ways. 
Take for example permutation $4321$. 
Each element can be pushed in either stacks $H$ or $V$ and output the identity at the end. 
Thus the decreasing permutation of size $n$ has more than $2^{n}$ ways to be sorted {\em i.e.} more than $2^{n}$ sorting words.


Several articles introduce restrictions either on the rules or on the stack structure. 
For example, in his PhD-thesis West introduced a greedy model with decreasing stacks \cite{West93}.
Permutations sortable with this model, called West-$2$-stack sortable permutations, are characterized and enumerated.


For our unrestricted case called sometimes in litterature {\em general 2-stack sorting problem}, 
no characterization of sortable permutations and no polynomial algorithm to decide if a permutation is sortable is known.
A common mistake when trying to sort a given permutation is to pop out the smallest element $i$ as soon as it lies in the stacks.
This operation may indeed move other elements if $i$ is not the topmost element of $H$. 
The elements above it are then transferred into $V$ before $i$ can be popped out. 
But sometimes, it can be necessary to take some elements of $\sigma$ and push them onto $H$ or $V$ before this transfer. 
Take for example permutation $ 3 2 4 6 1 7 9 8 5$. 
Trying to pop out the smallest element as soon as it is in the stacks leads to a dead-end. 
However, this permutation can be sorted using word 
$\rho_{3}\rho_{2}\lambda_{2}\rho_{4}\rho_{6}\rho_{1}\lambda_{1}\mu_{1}\mu_{2}\rho_{7}\lambda_{7}\lambda_{6}\lambda_{4}\lambda_{3}\mu_{3}\mu_{4}\rho_{9}\rho_{8}\rho_{5}\lambda_{5}\mu_{5}\mu_{6}\mu_{7}\lambda_{8}\mu_{8}\lambda_{9}\mu_{9}$.
But we prove that this natural idea of popping out smallest elements as soon as possible can be adapted considering right-to-left minima of the permutation.

We saw that a sorting process can be described as a word on the alphabet $\{\rlm\}$. 
In this article, we will also describe a sorting in a different way. 
Take the prefix of a sorting word, it corresponds to move some elements from the permutation to the stacks or output them. 
At the end of the prefix some elements may be in the stacks.
We can take a picture of the stacks and indeed, we will show that considering such pictures for all the prefixes 
that correspond to the entry of a right-to-left (RTL) minima of the permutation in $H$ is sufficient to decide the sortability. 
Such a picture is called a stack configuration.

\begin{defn}
A {\em stack configuration} $c$ is a pair of vectors $(v,w)$ of distinct integers 
such that the elements of $v$ (resp. of $w$) corresponds to the elements of $V$ (resp. of $H$) from bottom to top.

A stack configuration is {\em poppable} if elements in stacks $H$ and $V$ can be output in increasing order using operations $\lambda$ and $\mu$.
\end{defn}

Conditions for a stack configuration to be poppable have already been studied previously in \cite{Murphy02, PR13} and can be characterized by the following Lemma. Recall first that a permutation $\pi = \pi_{1}\pi_{2}\ldots\pi_{k}$ is a pattern of $\sigma = \sigma_{1}\sigma_{2}\ldots \sigma_{n}$ if and only if there exist indices $1 \leq i_{1} < i_{2} < \ldots < i_{k}$ such that $\sigma_{i_{1}}\sigma_{i_{2}}\sigma_{i_{3}}\ldots\sigma_{i_{k}}$ is order isomorphic to $\pi$.

\begin{lem}\label{lem:popable}
A stack configuration $c$ is poppable if and only if :
\begin{itemize}
\item Stack $H$ does not contain pattern $132$.
\item Stack $V$ does not contain pattern $12$.
\item Stacks $(V,H)$ does not contain pattern $|2|13|$.
\end{itemize}
Moreover, there is a unique way to pop the elements out in increasing order in terms of stack operations.
\end{lem}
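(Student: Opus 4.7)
I would prove both directions together by induction on the total number of elements in the stacks, combining sufficiency with uniqueness through a single greedy algorithm. For the \emph{necessity} direction, I would treat each forbidden pattern separately. If $V$ contains a $12$ pattern, the smaller element lies below the larger in $V$ and cannot reach the top without the larger being output first, contradicting an increasing output. If $H$ contains a $132$ pattern $(a,b,c)$ read bottom-to-top with $a<c<b$, then to output $a$ we must eventually $\lambda$ both $b$ and $c$ from $H$ to $V$, with $c$ moving first; $c$ cannot be $\mu$-ed out in the meantime since $c>a$ and $a$ is not yet output, so when $b$ is placed on top of $V$ some element $\le c < b$ lies directly below it, producing a $12$ pattern in $V$. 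Finally, if the $|2|13|$ pattern is present with $b\in V$ and $a<b<c$ where $a$ lies below $c$ in $H$, then $a$ must be output before $b$; this forces $c$ to be transferred to $V$ while $b$ is still there, and by the no-$12$ condition on $V$ the top of $V$ at that moment lies in $(a,b]$, hence is strictly less than $c$, producing a $12$ pattern upon transfer.

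For the \emph{sufficiency} direction I would analyse the greedy algorithm which, letting $m$ denote the smallest element currently in the stacks, applies $\mu$ if $m$ is on top of $V$ and $\lambda$ otherwise. The three conditions imply that $m$ is either on top of $V$ or in $H$, since the no-$12$ condition on $V$ forbids any element above $m$ inside $V$. I would then check by induction that each step preserves all three invariants. A $\mu$ step only deletes an element, so pattern-avoidance is trivially preserved. A $\lambda$ step transfers the top $c$ of $H$ onto the current top $t$ of $V$: the no-$|2|13|$ condition applied to $m$ (still below $c$ in $H$) and $t\in V$ forbids $m<t<c$, and since $t>m$ by minimality of $m$, this forces $t>c$, so no $12$ pattern is created in $V$. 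No $132$ pattern can appear in the new $H$ since elements are only removed. A new $|2|13|$ violation in the updated configuration would have $c$ playing the role of the ``$2$'', but then it would arise from $y,z$ below $c$ in old $H$ with $y<c<z$ and $y$ below $z$, which is exactly a forbidden $132$ pattern in old $H$. Uniqueness follows from the same case analysis: if $m$ is on top of $V$, applying $\lambda$ places an element strictly larger than $m$ above $m$ in $V$, hence creates a $12$ pattern, making the configuration unpoppable by the necessity direction; if $m$ lies in $H$, applying $\mu$ outputs an element strictly larger than $m$, violating the increasing-output requirement.

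The step I expect to be the main obstacle is verifying that the no-$|2|13|$ condition is exactly the right invariant to maintain during the $\lambda$ step, both in the sense that the current condition legitimises the transfer and in the sense that no new $|2|13|$ pattern is created after the transfer. The content of the lemma is that these three pattern conditions together form a tight invariant: each is individually necessary, but the real punchline is that together they suffice and are preserved throughout the unique greedy popping sequence.
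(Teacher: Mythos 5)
Your proof is correct. Note first that the paper does not actually prove this lemma: it is imported from the cited references (Murphy's thesis and the authors' earlier paper on pushall sorting), so you have supplied a self-contained argument where the paper only gives a pointer. Your argument is sound in both directions. For necessity, each of the three patterns is reduced to the observation that a $12$ in $V$ is fatal (the smaller element is trapped below the larger), and in the $132$ and $|2|13|$ cases you correctly show that the forced transfers create such a $12$; your detours through ``the element directly below'' are harmless since any smaller element anywhere below in $V$ already gives the contradiction. For sufficiency and uniqueness, the greedy invariant argument is exactly the right mechanism: the no-$12$ condition locates the minimum $m$ at the top of $V$ or in $H$, the no-$|2|13|$ condition with $j=m$, $k=c$, $i=t$ is precisely what certifies the $\lambda$ step (using that $V$ is decreasing so only its top needs checking), and the only new $|2|13|$ candidate after the transfer collapses to a $132$ in the old $H$ --- this closes the loop among the three conditions and is the crux you correctly identified. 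Two trivial boundary cases are worth a half-sentence each in a polished write-up: when $m$ is itself the top of $H$ the inequality $t>c$ follows directly from minimality rather than from the $|2|13|$ condition, and when one stack is empty the competing operation is simply unavailable, which only strengthens the uniqueness claim. Termination is immediate since each element is transferred at most once and then output.
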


The first two conditions are usual pattern relation, considering elements in the stack from bottom to top.
The third one means that there do not exist an element $i$ in $V$ and two elements $j,k$ in $H$ ($k$ above $j$) such that $j < i < k$.
There is a unique way to output those elements in increasing order as noticed in \cite{PR13}, 
so we will denote by $out_{c}(I)$ the word that consists in the operations necessary to output in increasing order elements of the set of values $I$ from a stack configuration $c$.

Notice that a stack configuration has no restriction upon its elements except that they must be different. 
Most of the time, a stack configuration will be associated to a permutation implying that the elements in the stacks are a subset of those of the permutation. 
In particular a {\em total stack configuration of $\sigma$} is a stack configuration in which the elements of the stacks are exactly  those of $\sigma$. 


%
%

In this article we often use decomposition of permutations into blocks. 
A block $B$ of a permutation $\sigma = \sigma_{1} \sigma_{2} \ldots \sigma_{n}$ is a factor $\sigma_{i} \sigma_{i+1} \ldots \sigma_{j}$ 
of $\sigma$ such that the set of values $\{ \sigma_{i}, \ldots, \sigma_{j}\}$ forms an interval. 
Notice that by definition of a factor, the set of indices $\{i, \dots, j\}$ also forms an interval.
Given two blocks $B$ and $B'$ of $\sigma$, we say that $B < B'$ if and only if $\sigma_{i} < \sigma_{j}$ for all $\sigma_i \in B$, $\sigma_j \in B'$. 
A permutation $\sigma$ is $\ominus$-decomposable if we can write it as $\sigma = B_{1} \ldots B_{k}$ 
such that $k \geq 2$ and for all $i$, $B_{i} > B_{i+1}$ in terms of blocks.
Otherwise we say that $\sigma$ is $\ominus$-indecomposable.
When each $B_i$ is $\ominus$-indecomposable, we write $\sigma = \ominus[B_{1},\ldots,B_{k}]$ and call it the $\ominus$-decomposition of $\sigma$. 
Notice that we do not renormalize elements of $B_i$ thus except $B_k$, the $B_i$ are not permutations.
Nevertheless, $B_i$ can be seen as a permutation by decreasing all its elements by $|B_{i+1}|+ \dots + |B_k|$.

The RTL (right-to-left) minima of a permutations are elements $\sigma_{k}$ such that there do not exist $j$ respecting $j > k$ and $\sigma_{j} < \sigma_{k}$.
We denote by $\sigma_{k_{i}}$ the $i^{\text{th}}$ right-to-left (RTL) minima of $\sigma$.
If $\sigma$ has $r$ RTL minima, then $\sigma = \dots \sigma_{k_{1}} \dots \sigma_{k_{2}} \dots \sigma_{k_{r}}$ with $\sigma_{k_{1}} = 1$ and $k_{r} = n$.

Take for example permutation $\sigma = 6\,5\,8\,7\,4\,1\,3\,2$. 
The $\ominus$-decomposition of $\sigma$ is $\sigma = \ominus[6\,5\,8\,7, 4, 1\,3\,2]$. 
Furthermore $\sigma$ has $2$ RTL-minima which are $\sigma_{6} = 1$ and $\sigma_{8} = 2$.

\begin{defn}
We denote $\sigma^{(i)} = \{ \sigma_{j} \mid j < k_{i} \text{ and } \sigma_{j} > \sigma_{k_{i}} \}$
the restriction of $\sigma$ to elements in the upper left quadrant of the $i^{\text{th}}$ right-to-left (RTL) minima $\sigma_{k_{i}}$.
The $\ominus_{i}$-decomposition of $\sigma$ is the $\ominus$-decomposition of $\sigma^{(i)} = \ominus[B_{1}^{(i)},\ldots, B_{s_{i}}^{(i)}]$. 
In the sequel $s_{i}$ always denote the number of blocks of $\sigma^{(i)}$ and $B_{j}^{(i)}$ the $j^{\text{th}}$ block in the $\ominus_{i}$-decomposition.
\end{defn}

There are two key ideas in this article. 
First, among all possible sorting words for a \sortable permutation, there always exists a sorting word respecting some condition denoted $\mathcal P$.
More precisely we prove that if a permutation $\sigma$ is sortable then there exists a sorting process in which 
the elements that lie in the stacks just before a right to left minima $k_{i}$ enters the stacks are exactly the elements of $\sigma^{(i)}$. 
A formal definition is given in Definition~\ref{def:propP}. 

The second idea is to encode the different sortings of a permutation respecting $\mathcal P$  by a sequence of graphs $\G{i}$ 
in which each node represents a stack configuration of a block $B_{j}^{(i)}$ and edges gives compatibility between the configurations.
The index $i$ is taken from $1$ to the number of right-to-left minima of the permutation. 

Section \ref{sec:general} study general properties on two-stack sorting and states which elements can move at each moment of a sorting process. 
Section \ref{sec:graph} introduces the sorting graph \G{i} which encode all the sortings of a permutation at a given time $t_{i}$ 
and gives an algorithm to compute this graph iteratively for all $i$ from 1 to the number of right-to-left minima. 
Last section focusses on complexity analysis.

\section{General results on two-stack sorting} \label{sec:general}
\subsection{Basic results}

We saw that a sorting process can be described as a word on the alphabet $\{\rlm\}$.
However not all words on the alphabet $\{\rlm\}$ describe sorting processes.

\begin{defn}[\stackw and \sortingw]
Let $\alpha \in \{\rlm\}$ and $w$ a word on the alphabet $\{\rlm\}$. 
Then $|w|_\alpha$ denotes the number of letters $\alpha$ in $w$.

A {\em \stackw} is a word $w \in \{\rlm\}^*$ such that for any prefix $v$ of $w$, $|v|_\rho \geq |v|_\lambda \geq |v|_\mu$.

A {\em \sortingw} is a \stackw $w$ such that $|w|_\rho = |w|_\lambda = |w|_\mu$.

For any permutation $\sigma$, a sorting word {\em for $\sigma$} is a \sortingw encoding a sorting process with $\sigma$ as input 
(leading to the identity of size $|\sigma|$ as output).
\end{defn}

Intuitively, \stackws are words describing some operations \rlm starting with empty stacks and an arbitrarily long input 
and they may be some elements in the stacks at the end of these operations, 
whereas \sortingws are words encoding a complete sorting process (stacks are empty at the beginning and at the end of the process). 

\begin{defn}[subword]
Let $I$ be a set of integers. 

For any decorated word $u$ we define $u_{|I}$ as the subword of $u$ made of letters $\rlmi$ with $i \in I$. 
For example, if $u= \rho_{3} \mu_{5} \lambda_{3} \rho_{6} \rho_{7} \lambda_{6}$ then $u_{|\{5, 6\}} = \mu_{5} \rho_{6} \lambda_{6}$.

We extend this definition to \stackws: given a permutation $\sigma$ and a \stackw $w$, 
$w_{|I}$ is the word of $\{\rlm\}^*$ obtaining from $\dec{w}{\sigma}_{|I}$ by deleting indices from letters $\rlmi$.

Intuitively, $w_{|I}$ is the subword of $w$ made of the operations of $w$ that act on integers of $I$
\end{defn}

\begin{lem}\label{lem:restrictStackword}
For any \stackw (resp. \sortingw) $w$, $w_{|I}$ is also a \stackw (resp. \sortingw).
\end{lem}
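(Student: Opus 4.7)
The plan is to exploit a per-element invariant on the decorated word. Concretely, for any stack word $w$ and permutation $\sigma$, the decorated word $\dec{w}{\sigma}$ contains each of $\rho_i$, $\lambda_i$, $\mu_i$ at most once, and whenever they appear, they appear in this order. This is immediate from the semantics of the three operations: an element $i$ must first be pushed onto $H$ (producing $\rho_i$), then transferred to $V$ (producing $\lambda_i$), and finally popped out (producing $\mu_i$), and each operation happens at most once on a given element.

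To prove that $w_{|I}$ is a stack word, I would take an arbitrary prefix $v'$ of $w_{|I}$ and lift it to the shortest prefix $u$ of $\dec{w}{\sigma}$ whose restriction to indices in $I$ coincides with $v'$ (after erasing the decorations). By the invariant above, for every integer $i$ the counts of $\rho_i$, $\lambda_i$, $\mu_i$ in $u$ all lie in $\{0,1\}$ and are weakly decreasing in that order. Summing these inequalities over $i \in I$ gives
\[
|v'|_\rho \;\geq\; |v'|_\lambda \;\geq\; |v'|_\mu ,
\]
which is exactly the condition defining a stack word.

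For the sorting word case, assume in addition that $|w|_\rho = |w|_\lambda = |w|_\mu$. Applying the per-element invariant to the whole of $\dec{w}{\sigma}$ and summing the inequalities forces the per-element counts to be all equal (each either $0$ or $1$), so every integer $i$ pushed by $w$ is also transferred and popped. Restricting to $I$, the three counts in $w_{|I}$ are all equal to the number of $i \in I$ such that $\rho_i$ appears in $\dec{w}{\sigma}$. Combined with the stack word property already established, this shows $w_{|I}$ is a sorting word.

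The argument is essentially bookkeeping; the only mildly delicate point is the lifting step from a prefix of $w_{|I}$ to a prefix of $w$, but once the canonical per-element order $\rho_i, \lambda_i, \mu_i$ is isolated, everything reduces to summing those inequalities over $i \in I$.
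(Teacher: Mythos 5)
Your proof is correct and follows essentially the same route as the paper: both arguments rest on the observation that in the decorated word each element $i$ contributes $\rho_i$, $\lambda_i$, $\mu_i$ at most once and in that order, so restricting to $I$ preserves the prefix inequalities (and, for sorting words, the equality of the three total counts). Your write-up merely makes the lifting and summation steps more explicit than the paper does.
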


\begin{proof}
As $w$ is a \stackw, for all $i$ from $1$ to $|\sigma|$, 
$\rho_i$ appears before $\lambda_i$ which itself appears before $\mu_i$ in $\dec{w}{\sigma}_{|I}$.
Therefore for any prefix $v$ of $w_{|I}$, $|v|_\rho \geq |v|_\lambda \geq |v|_\mu$.
If moreover $w$ is a \sortingw, let $\alpha \in \{\rlm\}$, then for any letter $\alpha_i$ in $\dec{w}{\sigma}_{|I}$,
$\rho_i, \lambda_i$ and $\mu_i$ appear each exactly once in $\dec{w}{\sigma}_{|I}$ 
thus $|w_{|I}|_\rho = |w_{|I}|_\lambda = |w_{|I}|_\mu$.
\end{proof}

Now we turn to stack configurations, beginning with linking \stackws to \stackcs.

\begin{defn}[Action of a stack word on a permutation]
Let $w$ be a \stackw.
Starting with a permutation $\sigma$ as input, the stack configuration reached after performing operations described by the word $w$ 
is denoted $\conf_{\sigma}(w)$.
A stack configuration $c$ is {\em reachable} for $\sigma$ if there exists a \stackw $w$ such that $c = \conf_{\sigma}(w)$.
In other words a stack configuration is reachable for $\sigma$ if there exists a sequence of operations $\rho, \lambda, \mu$ 
leading to this configuration with $\sigma$ as input.
\end{defn}

\begin{lem}\label{lem:oneUponTheOther}
If $\sigma = \ominus[B_{1},\ldots B_{k}]$ then in any poppable stack configuration $c$ reachable for $\sigma$, 
elements of $B_{i}$ are below elements of $B_{j}$ in the stacks for all $i < j$ (see Figure \ref{fig:blockDecreasing}).
\end{lem}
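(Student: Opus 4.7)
The plan is to treat stacks $H$ and $V$ separately, because the two are governed by essentially different invariants: for $H$ I would rely on the chronological order of the $\rho$-operations, while for $V$ I would invoke the pattern-avoidance characterization of poppability from Lemma~\ref{lem:popable}.

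Before splitting cases, I would extract two facts from the definition of the $\ominus$-decomposition $\sigma=\ominus[B_{1},\ldots,B_{k}]$. First, the blocks appear as consecutive factors of $\sigma$ in this order, so every element of $B_{i}$ precedes every element of $B_{j}$ in the input whenever $i<j$. Second, in terms of values one has $B_{i}>B_{j}$ for $i<j$, so every integer appearing in $B_{i}$ is strictly greater than every integer appearing in $B_{j}$. Along any \stackw $w$ with $c=\conf_{\sigma}(w)$, the letters $\rho_{x}$ occur in the order of $\sigma$, hence the first fact controls the order of the pushes and the second controls the comparison of values held by the two stacks.

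For $H$: an element enters only via $\rho$ (pushed on top) and may leave only via $\lambda$ (removed from the top), so the bottom-to-top order of the elements currently in $H$ coincides with the order of their $\rho$-letters in the prefix of $w$ that reaches $c$. If $a\in B_{i}\cap H$ and $b\in B_{j}\cap H$ with $i<j$, the first fact above yields that $a$ was pushed strictly before $b$, hence $a$ lies below $b$ in $H$. For $V$: Lemma~\ref{lem:popable} combined with the poppability of $c$ forbids the pattern $12$ in $V$, which means that the values in $V$ are strictly decreasing from bottom to top. If $a\in B_{i}\cap V$ and $b\in B_{j}\cap V$ with $i<j$, the second fact gives $a>b$, and the decreasing constraint then forces $a$ below $b$ in $V$.

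I do not expect a genuine obstacle: both parts of the argument are short and rely on invariants that are either definitional (the push-order preservation of $H$) or directly available from a previously stated lemma (the absence of pattern $12$ in $V$). The only subtlety worth being careful about is the orientation of the block inequality in the $\ominus$-decomposition, since the convention of the paper indexes blocks from the largest ($B_{1}$) to the smallest ($B_{k}$); this is precisely what ensures that the larger elements, coming from earlier blocks, end up at the bottom of $V$, in the direction imposed by the forbidden $12$ pattern.
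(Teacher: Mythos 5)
Your proof is correct and follows the same route as the paper: the bottom-to-top order of $H$ is the push order (hence the input order of the blocks), and the absence of the pattern $12$ in $V$ given by Lemma~\ref{lem:popable} forces $V$ to be decreasing in value, which combined with $B_{i}>B_{j}$ for $i<j$ places earlier blocks at the bottom. The paper's proof states exactly these two observations, only more tersely.
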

\begin{proof}
Notice that by definition of a stack, elements of $H$ are in increasing order from bottom to top for the indices.
Moreover elements of $V$ are in decreasing order from bottom to top for their value
since from Lemma \ref{lem:popable} they avoid pattern $12$.
This leads to the claimed property.
\end{proof}


\begin{figure}[ht]
\begin{center}
\begin{tikzpicture}[scale=.5]
\draw [very thick] (0,6) -- (0,0) -- (2,0) -- (2,6); 
\draw [very thick] (4,6) -- (4,0) -- (6,0) -- (6,6); 
\begin{scope}
\draw [thick] (0,1) -- (2,1);
\draw [thick] (0,2) -- (2,2);
\draw [thick] (0,4) -- (2,4);
\draw [thick] (0,5) -- (2,5);
\end{scope}
\begin{scope}[shift={(4,0)}]
\draw [thick] (0,1) -- (2,1);
\draw [thick] (0,2) -- (2,2);
\draw [thick] (0,4) -- (2,4);
\draw [thick] (0,5) -- (2,5);
\end{scope}
\begin{scope}[shift={(2,0)}]
\draw [dashed] (0,0) -- (2,0);
\draw [dashed] (0,1) -- (2,1);
\draw [dashed] (0,2) -- (2,2);
\draw [dashed] (0,4) -- (2,4);
\draw [dashed] (0,5) -- (2,5);
\end{scope}
\draw (1,0.5) node {$B_{1}$};
\draw (1,1.5) node {$B_{2}$};
\draw (1,4.5) node {$B_{k}$};
\draw (5,0.5) node {$B_{1}$};
\draw (5,1.5) node {$B_{2}$};
\draw (5,4.5) node {$B_{k}$};
\end{tikzpicture}
\caption{Poppable stack configuration reachable for $\ominus[B_{1},\ldots B_{k}]$.}\label{fig:blockDecreasing}
\end{center}
\end{figure}
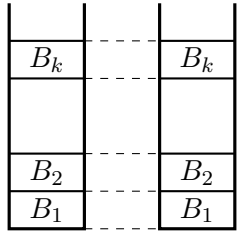

\begin{lem}\label{lem:petit}
Let $\sigma$ be a $2$-stack sortable permutation and $w = uv$ be a sorting word for $\sigma$. 
Assume that after performing operations of $u$, elements $1 \ldots i-1$ have been output and elements $i \ldots j$ are at the top of the stacks. 
Then there exists a sorting word $w' = u u' u''$ for $\sigma$ such that $u'$ consists only in moving elements $i \ldots j$ 
from the stacks to the output in increasing order without moving any other elements.

\begin{center}
\begin{tikzpicture}[scale=.33]
\begin{scope}
\draw [very thick] (0,6) -- (0,0) -- (2,0) -- (2,6); 
\draw [very thick] (4,6) -- (4,0) -- (6,0) -- (6,6); 
\begin{scope}
\draw [thick] (0,3) -- (2,3);
\draw [thick] (0,5) -- (2,5);
\end{scope}
\begin{scope}[shift={(4,0)}]
\draw [thick] (0,3) -- (2,3);
\draw [thick] (0,5) -- (2,5);
\end{scope}
\begin{scope}[shift={(2,0)}]
\draw [dashed] (0,3) -- (2,3);
\draw [dashed] (0,5) -- (2,5);
\end{scope}
\draw (3,4) node {$I=[i\ldots j]$};
\draw[pattern=north east lines] (0,0)  rectangle (2,3);
\draw[pattern=north east lines] (4,0)  rectangle (6,3);
\end{scope}
\draw (15,3) node {\scriptsize $i (i+1) \ldots j$};
\begin{scope}[shift={(18,0)}]
\draw [very thick] (0,6) -- (0,0) -- (2,0) -- (2,6); 
\draw [very thick] (4,6) -- (4,0) -- (6,0) -- (6,6); 
\begin{scope}
\draw [thick] (0,3) -- (2,3);
\end{scope}
\begin{scope}[shift={(4,0)}]
\draw [thick] (0,3) -- (2,3);
\end{scope}
\draw[pattern=north east lines] (0,0)  rectangle (2,3);
\draw[pattern=north east lines] (4,0)  rectangle (6,3);
\end{scope}
\draw[ultra thick,->] (6.3,3) -- (11.7,3);
\end{tikzpicture}
\end{center}
\end{lem}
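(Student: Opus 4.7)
My plan is to construct $u'$ and $u''$ by splitting $v$ according to which element each operation acts on. Let $I=\{i,i+1,\ldots,j\}$ and set $u'=v_{|I}$ and $u''=v_{|[n]\setminus I}$. Since every element of $I$ is already in the stacks after $u$, $u'$ contains no $\rho$-letter and consists entirely of $\lambda$'s and $\mu$'s acting on elements of $I$. I then need to show that $w'=u u' u''$ is a valid sorting word for $\sigma$ and that $u'$ extracts $i,\ldots,j$ in increasing order without moving any other element.

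The central observation is a decoupling between $I$-operations and non-$I$-operations. A $\lambda_k$ or $\mu_k$ with $k\in I$ moves or removes only the element $k$, and symmetrically a non-$I$-operation only touches a non-$I$ element. Combined with the hypothesis that in the configuration $c$ reached after $u$ the elements of $I$ sit strictly above all non-$I$ elements in both $V$ and $H$, this yields that at any step during the execution of $v$, the positions of $I$-elements in the stacks depend only on the $I$-operations performed so far, and symmetrically for the non-$I$ elements.

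Validity of $u u'$ then follows. For each $\lambda_k$ in $u'$, at the corresponding moment in $v$ the element $k$ was on top of $H$, so no element (in $I$ or not) sat above $k$ in $H$. In the simulated execution of $u u'$ the $I$-elements above $k$ in $H$ are the same as in $v$ (none); no $\rho_m$ has yet been performed in $u'$, so no new non-$I$ element lies above $k$; and by the top-hypothesis the original non-$I$ elements of $c$ lie below $k$. Hence $\lambda_k$ is valid in the simulated state, and an analogous argument handles $\mu_k$. After $u u'$ the stack configuration is precisely the sub-configuration of $c$ obtained by deleting the $I$-elements, and the symmetric decoupling argument then shows that $u''$ is valid and completes the sort to the identity; the output produced by $u$, $u'$, $u''$ is $1,\ldots,i-1$, then $i,\ldots,j$, then $j+1,\ldots,n$ in order.

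The main obstacle is stating the decoupling precisely and verifying that the top-of-stack condition transfers from $v$'s state to the simulated state; once this is in place, everything else is routine bookkeeping.
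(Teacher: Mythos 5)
Your proposal is correct and takes essentially the same route as the paper: the paper's proof also sets $u' = v_{|[i \ldots j]}$ and $u'' = v_{|[1 \ldots n] \setminus [i \ldots j]}$, noting that $u'$ equals $out_{c_{\sigma}(u)}([i \ldots j])$ and that the complement subword only concerns elements greater than $j$, and leaves the validity check to decorated-word bookkeeping. Your decoupling argument merely spells out that verification in more detail.
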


\begin{proof}
We claim that $u' = v_{|[i \ldots j]}$ and $u'' = v_{|![i \ldots j]}$ satisfy the desired property, 
where $![i \ldots j]$ is the set of integers $[1 \ldots |\sigma|] \setminus [i\ldots j]$.
This can be checked using decorated words associated to $w$ and $w'$ and noticing that 
$v_{|[i \ldots j]} = out_{c_{\sigma}(u)}([i\ldots j])$ and $v_{|![i \ldots j]} = v_{|>j}$
since by hypothesis after performing operations of $u$, 
elements $1 \ldots i-1$ have been output and elements $i \ldots j$ are at the top of the stacks.
\end{proof}

The stack configurations for a sorting process encode the elements that are currently in the stacks. 
But some elements are still waiting in the input and some elements have been output.
To fully characterize a configuration, we define an {\em extended} stack configuration of a permutation $\sigma$ of size $n$
to be a pair $(c,i)$ where $i \in \{1, \dots n+1\}$ and $c$ is a poppable stack configuration 
made of all elements within $\sigma_{1}, \sigma_{2},\ldots, \sigma_{i-1}$ that are greater than a value $p$. 
The elements $\sigma_{i},\ldots, \sigma_{n}$ are waiting to be pushed and elements $\sigma_{j} < p, j < i$ have already been output.
Notice that we don't need the configuration to be reachable.

\begin{defn}
Let $\sigma$ be a permutation and $(c,i)$ be an extended stack configuration of $\sigma$.
Then the extended stack configuration $(c',j)$ of $\sigma$ is {\em accessible} from $(c,i)$  if 
the stack configuration $(c',j)$ can be reached starting from $(c,i)$ and performing operations $\rho,\lambda$ and $\mu$
such that moves $\mu$ perfomed output elements of $c \cup \{\sigma_i \dots \sigma_n\}$ in increasing order.
\end{defn}

For example, if $\sigma = 2\,3\,1\,6\,5\,8\,4\,7$ then $(\tikz[baseline,scale=.3]{\ssConf{6,5}{8}},7)$ is accessible from 
$(\tikz[baseline,scale=.3]{\ssConf{}{3,2}},4)$ by the sequence of operations $\mu_{2}\mu_{3}\rho_{6}\rho_{5}\rho_{8}\lambda_{8}$. 
But $(\tikz[baseline,scale=.3]{\ssConf{6}{3,2}},5)$ is not accessible from $(\tikz[baseline,scale=.3]{\ssConf{3}{2,1}},4)$.

Indeed notice that the question of whether a permutation is \sortable can be reformulated as : \\
{\bf Is $(\tikz[baseline,scale=.3]{\ssConf{}{}}, n+1)$ accessible from $(\tikz[baseline,scale=.3]{\ssConf{}{}}, 1)$ ?}
 
To solve this problem is the main goal of this article and is somehow hard, however some special cases are easier to deal with. 
The following Lemma give conditions on the involved configurations under which the compatibility decision problem is linear
and can by solved by the {\em isAccessible} procedure given in Algorithm~\ref{algo:compatibleRectangle}. 
In the last sections, we show how more general cases can be solved using this Lemma.

\begin{lem}\label{lem:compatibleRectangle}
Let $\sigma$ be a permutation of size $n$ and $(c,i)$, $(c',j)$ two extended stack configurations of $\sigma$ with $i < j$. 
Let $E$ be the set of elements of $c$ and $F$ those of $c'$.
\begin{itemize}
\item If there  exists $k,\ell \in \{1 \dots n\}$ such that $E = \{\sigma_m \mid m \leq k\}$ and $F = \{\sigma_m \mid \sigma_m \geq \ell \}$
\item If moreover $E \cup F = \sigma$
\end{itemize}
Then we can decide in linear time whether $(c',j)$ is accessible from $(c,i)$ using Algorithm \ref{algo:compatibleRectangle}.
\end{lem}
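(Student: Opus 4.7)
The plan is to split accessibility into two decoupled phases: a \emph{clearing} phase that outputs the elements of $S := E \setminus F$, and a \emph{filling} phase that uses only $\rho$ and $\lambda$ to reach $c'$. Each phase reduces to a linear-time check, and together they form Algorithm~\ref{algo:compatibleRectangle}.

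The first key observation is that the hypothesis $E \cup F = \sigma$ forces every $\sigma_m$ with $m \geq i$ to lie in $F$, and hence to have value at least $\ell$. Consequently every element pushed or retained after time $i$ has value strictly greater than any element of $S$, and since outputs must be produced in increasing order, each $\mu_y$ with $y \in S$ may be commuted leftward past any subsequent $\rho_x$ (with $x$ of value $\geq \ell$): the plan is therefore to assume that every $\mu$-operation on $S$ occurs before any $\rho$. Phase~1 then reduces to checking that the elements of $S$ can be popped from $c$ in increasing order, which by Lemma~\ref{lem:popable} corresponds to the prefix of the unique popping sequence of $c$ that outputs the $|S|$ smallest values. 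Running this prefix yields an intermediate configuration $c_0$ containing exactly $F \cap E$.

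In Phase~2 one is left with a pushall-style problem with known source $c_0$ and known target $c'$: push $\sigma_i, \ldots, \sigma_{j-1}$ in order and interleave $\lambda$-moves so as to reach $c'$ with no $\mu$. Since $c'$ prescribes the final stack ($H$ or $V$) and the depth of every element of $F$, the plan is to read off uniquely (i) which elements must undergo a $\lambda$-move and (ii) the order of these $\lambda$-moves, which is forced by the bottom-to-top ordering of $V$ in $c'$. A single left-to-right scan over $\sigma_i, \ldots, \sigma_{j-1}$, merged with the prescribed sequence of $\lambda$'s read from $c'$, produces the unique candidate operation sequence; at each step one verifies locally that the $\lambda$ acts on the current top of $H$, that the pushed element lands in the position required by $c'$, and that $c_0$ and $c'$ are compatible (in particular, that $V$ of $c_0$ forms a bottom prefix of $V$ in $c'$).

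Both phases run in time linear in $|c| + |c'| + (j - i)$, yielding the claimed complexity. The main obstacle is Phase~2: one must rigorously show that the greedy scan either builds a valid operation sequence or correctly witnesses infeasibility. This rests on the essentially unique character of valid pushall strategies when the source and target configurations are fixed, a property along the lines of the pushall sorting results developed in~\cite{PR13}.
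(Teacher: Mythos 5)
Your reduction to two decoupled phases is not sound: the claim that every $\mu_y$ with $y\in S$ can be commuted to the left of every $\rho_x$ is false, and the resulting algorithm returns wrong answers. The obstruction is a chain of dependencies running through $\lambda$-moves. Take $\sigma=1\,2\,3$, $i=3$, $j=4$, $k=2$, $\ell=2$, so that $E=\{1,2\}$, $F=\{2,3\}$, $S=\{1\}$; let $c$ have $H=(1,2)$ from bottom to top and $V$ empty, and let $c'$ have $H$ empty and $V=(3,2)$ from bottom to top. All hypotheses of the lemma are satisfied, and $(c',4)$ is accessible from $(c,3)$ via $\rho_3\lambda_3\lambda_2\lambda_1\mu_1$. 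In any successful word here $\rho_3$ must precede $\mu_1$: the target $c'$ forces $3$ to sit below $2$ in $V$, hence $\lambda_3$ must precede $\lambda_2$, while $\lambda_2$ is itself needed to uncover $1$ in $H$ before $\mu_1$ can occur. Your Phase~1 would instead pop $1$ immediately ($\lambda_2\lambda_1\mu_1$), leaving $2$ at the bottom of $V$ in $c_0$, after which $3$ can never be placed below it, and your procedure would answer ``not accessible''. So the commutation argument breaks precisely when a newly pushed element must be transferred into $V$ before some retained element of $E$ is transferred, which is a situation the lemma genuinely has to handle.

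The paper's proof avoids this by not separating the phases at all: it runs a single greedy interleaved scan and shows, by a case analysis on the top of $H$, the top of $V$, the next input element and the target $c'$, that at every step exactly one of $\rho,\lambda,\mu$ can possibly lead to $c'$ (Lemma~\ref{lem:petit} justifies the forced pops, and the pattern conditions of Lemma~\ref{lem:popable} rule out the alternatives). That step-by-step determinism is the real content of the argument, and it is also what your Phase~2 sketch leaves unproved (``essentially unique character of valid pushall strategies''); you would need to carry out that case analysis anyway, at which point the two-phase normal form is both unnecessary and, as the example shows, incorrect.
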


\begin{proof}
We  prove by case study that there is no choice between operations $\rho, \lambda, \mu$ at each time step. 
This is illustrated by Algorithm \ref{algo:compatibleRectangle}. 
We first prove its correctness before studying its complexity.

We start with configuration $curr = c$. 
By studying specific elements of the current configuration $curr$, 
we prove that we can always decide which operation should be performed to transform $curr$ into $c'$.  
If at any step this operation is forbidden then $c'$ is not accessible from $curr$.
Thus repeating the following process will eventually lead to decide whether $c'$ is accessible from $c$.

Notice that by definition, $c$ and $c'$ are poppable thus $curr$ has to be poppable, hence to avoid the three patterns of Lemma~\ref{lem:popable}.
Let $p$ be the next element to be output, {\em i.e.} the smallest element of $c \cup \{\sigma_i \dots \sigma_n\}$. 
Let $\sigma_{H}$ (resp.~$\sigma_V$) be the topmost element of $H$ (resp.~of $V$) and $\sigma_{q}$ be the element waiting in the input to be pushed onto $H$
($\sigma_{q}$ may not exist and in that case $\sigma_{q} = \varnothing$;
at the beginning $\sigma_{q} = \sigma_i$).

\begin{tikzpicture}[scale=.4]
\useasboundingbox (-3,-0.5) rectangle (8,4.5);
\draw[help lines, dashed] (0,0) rectangle (4,4);
\draw[thick, red] (0,2) rectangle (4,4);
\draw[red] (2.5,4.5) node {{\boldmath$F$}};
\draw[red] (2,-0.5) node {$k$};
\draw[thick, vert] (0,0) rectangle (2,4);
\draw[vert] (-0.5,1.5) node {{\boldmath$E$}};
\draw[vert] (4.5,2) node {$\ell$};
\draw (3,1) node {$\emptyset$};
\end{tikzpicture}
\begin{tikzpicture}[scale=.3]
\useasboundingbox (-5,-1.5) rectangle (11,5.5);
\draw[very thick] (5,3.5) -- (5,-1) -- (7,-1) -- (7,3.5);
\draw[very thick] (0,3.5) -- (0,-1) -- (2,-1) -- (2,3.5);
\draw (10.2,2.9) node {$\sigma_q \cdots \sigma_n$};
\draw (7.5,4) node {$\curvearrowleft$};
\draw (7.5,5) node {$\rho$};
\draw (3.5,4) node {$\curvearrowleft$};
\draw (3.5,5) node {$\lambda$};
\draw (-0.5,4) node {$\curvearrowleft$};
\draw (-0.5,5) node {$\mu$};
\draw (-3.8,3) node {$1 \cdots p-1$};
\draw (1.1,2) node {$\sigma_V$};
\draw (1,1) node {$\vdots$};
\draw (6.1,2) node {$\sigma_H$};
\draw (6,1) node {$\vdots$};
\end{tikzpicture}

\begin{itemize}
\item If $\sigma_{V} = p$ then we perform $\mu$ thanks to Lemma \ref{lem:petit}.
\item Otherwise operation $\mu$ is forbidden. We have to chose between $\rho$ and $\lambda$.
Moreover $p \notin V$ as $V$ is in decreasing order from bottom to top.
\begin{enumerate}
\item Suppose that $\sigma_{H} < \ell$. 
This means that $\sigma_{H} \not\in F$ {\em i.e.} $\sigma_{H} \not\in c'$. 
Notice that by definition of $p$, $p \leq \sigma_{H}$ thus $p \not\in c'$.
Moreover $p \notin V$ thus $p \in H$. 
If $p = \sigma_{H}$ then, by Lemma \ref{lem:petit}, we can pop out $p$. 
Thus we perform $\lambda$. 
If $\sigma_{H} \not= p$, then we will prove that all elements $x$ such that $p \leq x \leq \sigma_{H}$ form an interval at the top of the stacks. 
Those elements are all in the stacks by definition of $\ell$ and $p$. 
As $V$ is decreasing, the elements of $[p\ldots \sigma_{H}]$ belonging to $V$ are at the top of it. 
Consider now the position of those elements in $H$. 

Suppose that it is not an interval. 
Then it exists an element $x$ in $H$ such that $x < \sigma_{H}$ and there is an element $y > \sigma_{H}$ between $x$ and $\sigma_H$. 
But in that case, elements $x y \sigma_{H}$ form the pattern $1\,3\,2$ and $curr$ is not poppable 
so any movement is allowed here $\rho, \lambda$ or $\mu$ because we will never reach $c'$.

Suppose now that the elements $[p\ldots \sigma_{H}]$ form an interval in $H$ and $V$. 
Then as $p \in H$ is the smallest element, by Lemma \ref{lem:petit}, we want to pop out elements $[p\ldots \sigma_{H}]$, hence we perform $\lambda$.

In conclusion, if $\sigma_{H} < \ell$ we perform $\lambda$.

\item If not, then  $\sigma_{H} \geq \ell$ and thus $\sigma_{H} \in c'$. Once again there are different cases:
\begin{enumerate}
	\item If $H = \varnothing$ then $\lambda$ is forbidden, thus we perform $\rho$.
	\item If $\sigma_{H} \in H(c')$, it must stay in $H$ thus $\lambda$ is forbidden and we perform $\rho$.
	\item Else $\sigma_{H} \in V(c')$. 
	\begin{itemize}
		\item If $\sigma_{q} \in H(c')$ then $\rho$ is forbidden because $\sigma_{q}$ would prevent $\sigma_{H}$ from moving. 
                      Thus we perform $\lambda$.
		\item Else $\sigma_{q} \in V(c')$. 
                      If $\sigma_{H} > \sigma_{q}$, as $\sigma_{H} \in V(c')$, $\rho$ is forbidden otherwise we cannot put $\sigma_{q}$ above $\sigma_{H}$ in $V$. 
                      Thus we perform $\lambda$.
		\item Otherwise $\sigma_{H}, \sigma_{q} \in V(c')$ and $\sigma_{H} < \sigma_{q}$. 
                      $\lambda$ is forbidden otherwise we cannot put $\sigma_{H}$ above $\sigma_{q}$ in $V$. Thus we perform $\rho$.
	\end{itemize}
\end{enumerate}
\end{enumerate}
\end{itemize}
We have proved that at each step of the algorithm, we know which move we have to do if we want to reach $c'$.
Moreover while $q < j$ or $p < \ell$ or $\sigma_{H} \in V(c')$, it is impossible that $curr = c'$ so we have to continue.
Conversely if $q \geq j$ and $p \geq \ell$ and $\sigma_{H} \notin V(c')$ then $\rho$ and $\mu$ and $\lambda$ are forbidden and we have to stop.
Then if $curr = c'$, $c'$ is accessible from $c$, otherwise $c'$ is not accessible from $c$.

Finally there are at most $3n$ steps since at each step of the algorithm we perfom a move $\rho$, $\lambda$ or $\mu$.
Moreover each step takes a constant time, therefore the algorithm runs in linear time.
\end{proof}

\begin{algorithm}[H]
\KwData{$\sigma$ a permutation and $(c,i),(c',j)$ two stack configurations of $\sigma$ respecting conditions of Lemma~\ref{lem:compatibleRectangle}}
\KwResult{{\tt true} or {\tt false} depending whether the configuration $c'$ is accessible from $c$} 
\Begin{
	Put configuration $c$ in the stacks $H$ and $V$\;
	$p \leftarrow $ the smallest element of $c \cup \{\sigma_i \dots \sigma_n\}$ (next element to be output)\;
	$q \leftarrow i$ (next index of $\sigma$ that must enter the stacks)\;
	\While{$q < j$ OR $p < \ell$ OR $\sigma_{H} \in V(c')$}{
		\eIf{$\sigma_{V} = p$}{
			Perform $\mu$; $p \leftarrow p+1$\;
		}{
			\eIf{$\sigma_{H} < \ell$}{
				Perform $\lambda$\;
			}{
				\eIf{$H = \varnothing$ OR $\sigma_{H} \in H(c')$}{
					Perform $\rho$; $q \leftarrow q+1$\;
				}{
					\eIf{$\sigma_{q} \in H(c')$ OR $\sigma_{H} > \sigma_{q}$}{
						Perform $\lambda$\;
					}{
						Perform $\rho$; $q \leftarrow q+1$\;
					}
				}
			}
		}
	}
	Return $(H,V) == c'$\;
}
\caption{isAccessible$\big( (c,i),(c',j), \sigma \big)$}\label{algo:compatibleRectangle}
\end{algorithm}


In the sequel of this article, we do not compute all possible stack configurations during a sorting process of a given permutation $\sigma$
but indeed focus on specific steps of the sorting. 
We study the possible stack configurations at each time step $t_{i}$
corresponding to the moment just before the right to left minimum $\sigma_{k_{i}}$ is pushed onto stack $H$.
Those configurations are configurations $(c,k_{i})$ accessible from $(\tikz[baseline,scale=.3]{\ssConf{}{}}, 1)$. 

We will prove that we can add two different restrictions on these configurations.
First, $(c,k_{i})$ must be a pushall stack configuration of $\sigma^{(i)}$ (see below).
Second $(c,k_{i})$ must be an evolution of some configuration $(c',k_{i-1})$ between time $t_{i-1}$ and $t_{i}$.


\begin{defn}[pushall configuration]
A stack configuration is a {\em pushall} stack configuration of $\sigma$ if it is poppable, total and reachable for $\sigma$.
\end{defn}

\subsection{From time $t_{i}$ to time $t_{i+1}$}

Thanks to the previous decomposition into different time steps corresponding to each moment a right-to-left minima is pushed onto $H$ 
and our previous work \cite{PR13} on \pushall permutations,
we can give a polynomial algorithm deciding whether a permutation is \sortable.
Indeed, we will prove that it is enough to consider configurations such that for each $t_{i}$ 
the only elements in the stacks are exactly those of $\sigma^{(i)}$.
But $\sigma^{(i)}$ is a permutation that ends with its smallest element such that a sorting consists in 
pushing all elements into the stacks then popping all elements out.
Those possibilities are described in \cite{PR13} where Proposition 4.8 gives all possible pushall stack configurations.
When a permutation is $\ominus$-indecomposable, 
Theorem 4.4 of \cite{PR13} states that the number of possible pushall stack configurations is linear in the size of the permutation.
This will ensure that our algorithm runs in polynomial time.
Using this result, we now have the possible total stack configurations at time $t_{1}$.

The key idea for computing possible stack configurations at time $t_i$ relies on Lemma~\ref{lem:compacc}. 
Informally, it is possible to decide whether a configuration at time $t_{i}$ can evolved into a specific configuration at time $t_{i+1}$. 
Moreover, during this transition, only a few moves are undetermined. 
Indeed the largest elements won't move, the smallest one will be pushed accordingly to \cite{PR13} 
and the remaining ones form a $\ominus$-indecomposable permutation that will allow us to exhibit a polynomial algorithm.

First of all we denote by $A^{(i)}$ the common part of the permutations $\sigma^{(i)}$ and $\sigma^{(i+1)}$, 
that is, $A^{(i)} = \sigma^{(i)} \bigcap \sigma^{(i+1)} = \{ \sigma_{j} \mid j < k_{i} \text{ and } \sigma_{j} > \sigma_{k_{i+1}} \}$. 
This subpermutation $A^{(i)}$ intersects $\ominus$-indecomposable blocks of $\sigma^{(i)}$ and $\sigma^{(i+1)}$. 
Let $p^{(i)}$ (resp. $q^{(i+1)}$) be the index such that $B_{p^{(i)}}^{(i)}$ (resp. $B_{q^{(i+1)}}^{(i+1)}$) contains the smallest value of $A^{(i)}$. 
Let $D^{(i)} =  (B_{p^{(i)}}^{(i)} \bigcup B_{q^{(i+1)}}^{(i+1)}) \bigcap A^{(i)}$.

\begin{tikzpicture}[scale=.3]
\draw[draw=none, fill=black!10!white] (-1.2,5) rectangle (8,14.2);
\draw (7,13.5) node{{\small $A^{(i)}$}};
\draw (-1.2,0) -- (8,0);
\draw (-1.2,5) -- (13,5);
\draw (8,0) -- (8,14.2);
\draw (13,5) -- (13,14.2);
\draw [green!60!black] (8,0) [fill] circle (4pt);
\draw (9.1,0) node {{\scriptsize $\sigma_{k_i}$}};
\draw [red] (13,5) [fill] circle (4pt);
\draw (13,4) node {{\scriptsize $\sigma_{k_{i+1}}$}};
\draw [green!60!black] (6.9,1.1) rectangle +(0.8,-0.8);
\draw [green!60!black] (5.9,2.1) rectangle +(0.8,-0.8);
\draw [red] (11.9,6.1) rectangle +(0.8,-0.8);
\draw [red] (10.9,7.1) rectangle +(0.8,-0.8);
\draw (1,12) rectangle +(0.8,-0.8);
\draw (0,13) rectangle +(0.8,-0.8);
\draw (-1,14) rectangle +(0.8,-0.8);
\draw[thick, red,fill=red!20,opacity=.7] (10.7,7.3) rectangle +(-8.6,3.6);
\draw[red] (10,11.6) node {{\scriptsize \boldmath$B^{(i+1)}_{q^{(i+1)}}$}};
\draw[thick, green!60!black,fill=green!20,opacity=.7] (5.7,2.3) rectangle +(-3.6,8.6);
\draw[green!60!black] (1.1,4) node {{\scriptsize \boldmath$B^{(i)}_{p^{(i)}}$}};
\draw [dashed]  (4.45,8.55) circle  (100pt) node[left=17pt] {{\small$ D^{(i)}$}};
\end{tikzpicture}
\begin{tikzpicture}[scale=.3]
\draw[draw=none, fill=black!10!white] (-1.2,5) rectangle (8,14.2);
\draw (7,13.5) node{{\small$A^{(i)}$}};
\draw (-1.2,0) -- (8,0);
\draw  (-1.2,5) -- (13,5);
\draw (8,0) -- (8,14.2);
\draw  (13,5) -- (13,14.2);
\draw[green!60!black ] (8,0) [fill] circle (4pt);
\draw  (9.1,0) node {{\scriptsize $\sigma_{k_i}$}};
\draw [red]  (13,5) [fill] circle (4pt);
\draw  (13,4) node {{\scriptsize $\sigma_{k_{i+1}}$}};
\draw[red] (8.85,10.5) node {{\scriptsize \boldmath$B^{(i+1)}_{q^{(i+1)}}$}};
\draw[thick, green!60!black, fill=green!20,opacity=.7] (5.7,2.3) rectangle
+(-4.78,9.78);
\draw[thick, red, fill=red!20, opacity=.7] (10.7,7.3) rectangle +(-7.5,2.5);
\draw[green!60!black] (2.7,1.4) node {{\scriptsize \boldmath$B^{(i)}_{p^{(i)}}$}};
\draw [green!60!black] (6.9,1.1) rectangle +(0.8,-0.8);
\draw [green!60!black] (5.9,2.1) rectangle +(0.8,-0.8);
\draw  [red]  (11.9,6.1) rectangle +(0.8,-0.8);
\draw  [red] (10.9,7.1) rectangle +(0.8,-0.8);
\draw [red] (2.25,10.75) rectangle +(0.8,-0.8);
\draw [red] (1.25,11.75) rectangle +(0.8,-0.8);
\draw (0,13) rectangle +(0.8,-0.8);
\draw (-1,14) rectangle +(0.8,-0.8);	
\draw [dashed]  (3.8,9.2) circle  (117pt) node[left=22pt] {{\small$D^{(i)}$}};
\end{tikzpicture}
\begin{tikzpicture}[scale=.3]
\draw[draw=none, fill=black!10!white] (-1.3,5) rectangle (8,14.2);
\draw (-.3,5.7) node{{\small$A^{(i)}$}};
\draw (-1.25,0) -- (8,0);
\draw (-1.25,5) -- (13,5);
\draw (8,0) -- (8,14.2);
\draw (13,5) -- (13,14.2);
\draw [green!60!black](8,0) [fill] circle (4pt);
\draw (9.2,0) node {{\scriptsize $\sigma_{k_i}$}};
\draw [red] (13,5) [fill] circle (4pt);
\draw (13,4) node {{\scriptsize $\sigma_{k_{i+1}}$}};
\draw [green!60!black](6.9,1.1) rectangle +(0.8,-0.8);
\draw [green!60!black] (5.9,2.1) rectangle +(0.8,-0.8);
\draw [red] (11.9,6.1) rectangle +(0.8,-0.8);
\draw [red] (10.9,7.1) rectangle +(0.8,-0.8);
\draw (0,13) rectangle +(0.8,-0.8);
\draw (-1,14) rectangle +(0.8,-0.8);
\draw[thick, red, fill=red!20, opacity=.7]  (10.7,7.3) rectangle +(-9.7,4.7);
\draw[red] (8.85,13.4) node {{\scriptsize \boldmath$B^{(i+1)}_{q^{(i+1)}}$}};
\draw[thick, green!60!black, fill=green!20,opacity=.7](5.7,2.3) rectangle +(-2.5,7.5);
\draw[green!60!black] (2,4) node {{\scriptsize \boldmath$B^{(i)}_{p^{(i)}}$}};
\draw [green!60!black](2.25,10.75) rectangle +(0.8,-0.8);
\draw [green!60!black](1.25,11.75) rectangle +(0.8,-0.8);
\draw [dashed]  (3.8,9.2) circle  (117pt) node[left=22pt] {{\small$D^{(i)}$}};
\end{tikzpicture}

\begin{lem}\label{lem:blocks}
For any $j < \min (p^{(i)}, q^{(i+1)})$, $B_{j}^{(i)} = B_{j}^{(i+1)}$.
\end{lem}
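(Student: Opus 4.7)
My approach is to view $A^{(i)}$ as a permutation in its own right and exploit uniqueness of the $\ominus$-decomposition. The crucial observation is that $A^{(i)}$ is obtained both as a subpermutation of $\sigma^{(i)}$ (by keeping only values $> \sigma_{k_{i+1}}$) and as a subpermutation of $\sigma^{(i+1)}$ (by keeping only positions $< k_i$), and the two restrictions coincide as permutations. I would therefore compare the $\ominus$-decomposition of $A^{(i)}$ computed in each of these two ways and read off the claim.

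\textbf{Step 1: the top blocks lie inside $A^{(i)}$.} First I would show that for every $j < p^{(i)}$ the block $B_j^{(i)}$ is entirely contained in $A^{(i)}$. Indeed, by definition of $\ominus$-decomposition the values of $B_j^{(i)}$ are strictly larger than those of $B_{p^{(i)}}^{(i)}$, hence larger than the smallest value $\sigma_a$ of $A^{(i)}$, which is itself $> \sigma_{k_{i+1}}$; moreover $B_j^{(i)}\subseteq\sigma^{(i)}$ gives positions $< k_i$, so $B_j^{(i)} \subseteq A^{(i)}$. The same argument, using that the positions of $B_j^{(i+1)}$ (for $j < q^{(i+1)}$) lie strictly to the left of $\sigma_a$'s position $a < k_i$, shows $B_j^{(i+1)} \subseteq A^{(i)}$.

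\textbf{Step 2: these blocks survive as blocks of $A^{(i)}$.} The next step is to check that each such $B_j^{(i)}$ (resp.\ $B_j^{(i+1)}$) is still a $\ominus$-indecomposable block when $A^{(i)}$ is regarded as a permutation on its own. Contiguity in position is preserved because consecutive $\ominus$-blocks of $\sigma^{(i)}$ (resp.\ $\sigma^{(i+1)}$) are already adjacent factors and no element of $B_j^{(i)}$ is removed; the interval property in values is preserved under removing only values outside the range of $B_j^{(i)}$; and $\ominus$-indecomposability is an intrinsic property of the block itself, independent of the ambient permutation. Hence the sequences $B_1^{(i)},\ldots,B_{p^{(i)}-1}^{(i)}$ and $B_1^{(i+1)},\ldots,B_{q^{(i+1)}-1}^{(i+1)}$ are each a prefix of the $\ominus$-decomposition of $A^{(i)}$ (the following block being produced by $B_{p^{(i)}}^{(i)}\cap A^{(i)}$, respectively $B_{q^{(i+1)}}^{(i+1)}\cap A^{(i)}$, whose values are strictly smaller, so the decreasing pattern is maintained).

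\textbf{Step 3: conclude by uniqueness.} Since the $\ominus$-decomposition of $A^{(i)}$ is unique, its first $\min(p^{(i)},q^{(i+1)})-1$ blocks are determined, so $B_j^{(i)} = B_j^{(i+1)}$ for every $j < \min(p^{(i)},q^{(i+1)})$. The main care point is really only Step 2—making sure that deleting values (from the $\sigma^{(i)}$ side) or deleting positions (from the $\sigma^{(i+1)}$ side) does not destroy the contiguity or the value-interval property of the blocks we keep—but once one notes that the removed values lie entirely below $A^{(i)}$ and the removed positions lie entirely to the right of $A^{(i)}$, both preservations are immediate.
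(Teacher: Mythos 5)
The paper states this lemma without proof, so there is no argument of record to compare yours against; judged on its own, your proof is correct. You rightly reduce everything to the uniqueness of the $\ominus$-decomposition of $A^{(i)}$ into $\ominus$-indecomposable blocks, showing that $B_1^{(i)},\ldots,B_{p^{(i)}-1}^{(i)}$ and $B_1^{(i+1)},\ldots,B_{q^{(i+1)}-1}^{(i+1)}$ are both initial segments of that decomposition, and the two verifications that carry the weight are done: on the $\sigma^{(i)}$ side the deleted elements (values $\leq \sigma_{k_{i+1}}$, hence in blocks of index $\geq p^{(i)}$) lie entirely below and to the right of the retained blocks, and on the $\sigma^{(i+1)}$ side the deleted elements (positions $\geq k_i$, hence in blocks of index $\geq q^{(i+1)}$) likewise, so contiguity, the value-interval property, and indecomposability of the retained blocks all survive the restriction to $A^{(i)}$.
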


\begin{lem}\label{lem:HautGaucheA}
Let $\sigma_{\ell} \in A^{(i)}$. During a sorting process of $\sigma$, elements $\sigma_{m}$ such that $\sigma_{m} > \sigma_{\ell}$ and $m < \ell$  do not move between $t_{i}$ and $t_{i+1}$.
\end{lem}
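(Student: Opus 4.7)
The plan is to show that $\sigma_m$ is \emph{trapped}: none of the operations $\rho,\lambda,\mu$ can act on $\sigma_m$ during the interval $[t_i,t_{i+1}]$.

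First I would establish that both $\sigma_m$ and $\sigma_\ell$ remain in the stacks throughout $[t_i,t_{i+1}]$. Since $m<\ell<k_i$, both indices are less than $k_i$, so the two elements have been pushed before time $t_i$ and operation $\rho$ cannot act on them again. They also cannot have been output by any time in $[t_i,t_{i+1}]$: both satisfy $\sigma_m,\sigma_\ell>\sigma_{k_{i+1}}$, while $\sigma_{k_{i+1}}$ is not pushed until $t_{i+1}$ and hence is not output anywhere in the interval; because outputs appear in increasing order, no value larger than $\sigma_{k_{i+1}}$ can be output during $[t_i,t_{i+1}]$. In particular $\mu$ cannot act on $\sigma_m$ in this interval.

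It remains to rule out that $\sigma_m$ is transferred from $H$ to $V$ by a $\lambda$ operation. I would assume for contradiction that this happens at some moment $t^{*}\in(t_i,t_{i+1})$; just before $t^{*}$ the element $\sigma_m$ occupies the top of $H$. The case analysis is then on the location of $\sigma_\ell$ at this same moment (which must be in the stacks, by the previous paragraph). If $\sigma_\ell\in H$: the elements of $H$ are arranged by increasing index from bottom to top, so $\ell>m$ forces $\sigma_\ell$ to sit strictly above $\sigma_m$, contradicting that $\sigma_m$ is the top of $H$. If $\sigma_\ell\in V$: by Lemma~\ref{lem:popable}, $V$ avoids the pattern $12$, so its values strictly decrease from bottom to top; the current top of $V$ therefore has value at most $\sigma_\ell<\sigma_m$. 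Pushing $\sigma_m$ on top of $V$ would then bury $\sigma_\ell$ below a larger element, producing a $12$ pattern inside $V$ that can never be unwound; since once in $V$ an element can only leave by $\mu$, the sorting word could never output $\sigma_\ell$ before $\sigma_m$, contradicting that it is a sorting word of $\sigma$.

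Both subcases yield a contradiction, so the $\lambda$ operation cannot act on $\sigma_m$ during $[t_i,t_{i+1}]$, and the lemma follows. I do not expect any step to be a real obstacle; the subtlest point is the initial ``barrier'' observation that $\sigma_{k_{i+1}}$, being the next right-to-left minimum and not yet pushed, blocks every larger value from being output in the interval, and this is what pins both $\sigma_m$ and $\sigma_\ell$ inside the stacks and lets the case split on $\sigma_\ell$'s position close the argument.
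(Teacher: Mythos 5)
Your proof is correct and follows essentially the same route as the paper's: both arguments first pin $\sigma_m$ and $\sigma_\ell$ in the stacks during $[t_i,t_{i+1}]$ because $\sigma_{k_{i+1}}$ must be output first, then rule out a $\lambda$ on $\sigma_m$ by observing that $\sigma_\ell$ is either above $\sigma_m$ in $H$ (by index order) or already in $V$, where transferring $\sigma_m$ would create a forbidden $12$ pattern. The only cosmetic difference is that you organize the case split around the hypothetical $\lambda$ move and the position of $\sigma_\ell$, while the paper splits on the position of $\sigma_m$ at time $t_i$; the substance is identical.
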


\begin{proof}
Let $\sigma_{m}$ be an element such that $m< \ell$ and $\sigma_{m} > \sigma_{\ell}$. 
As $\sigma_{\ell} \in A^{(i)}$, $\sigma_{\ell} > \sigma_{k_{i+1}}$ and $j < k_{i}$, so does $\sigma_{m} > \sigma_{k_{i+1}}$ and $m < k_{i}$. 
Hence both elements $\sigma_{m}, \sigma_{\ell}$ lie in the stacks between $t_{i}$ and $t_{i+1}$ 
(they cannot be output as $\sigma_{k_{i+1}}$ must be output first).
Suppose that $\sigma_{m}$ is in $H$ at time $t_{i}$. 
As $m < \ell$, element $\sigma_{\ell}$ is pushed after $\sigma_{m}$ into the stacks, 
thus either $\sigma_{\ell}$ is above $\sigma_{m}$ in $H$ or lies in $V$ at time $t_{i}$ and $t_{i+1}$.
So, $\sigma_{m}$ cannot move into $V$, otherwise $\sigma_{\ell}$ would be under it in $V$ and $V$ would contain a pattern $12$.
So, $\sigma_{m}$ stay in $H$.

Suppose now that $\sigma_{m}$ is in $V$ at time $t_{i}$.
As noticed previously, this element is not output at time $t_{i+1}$.
So it also lies in stack $V$ at time $t_{i+1}$, proving the lemma.
\end{proof}

%


In the following we study conditions for $2$ total pushall stack configurations $c$ and $c'$ 
corresponding to stack configuration of $\sigma^{(i)}$ and $\sigma^{(i+1)}$ to be accessible one from the other, 
that is, if we can move elements starting from $c$ at time $t_{i}$ to obtain $c'$ at time $t_{i+1}$.

\begin{lem}\label{lem:compacc}
Let $(c,k_{i})$ (resp.~$(c',k_{i+1})$) be a total stack configuration of $\sigma^{(i)}$ (resp.~$\sigma^{(i+1)}$). 
Let $\pi = \sigma_{|B^{(i)}_{p^{(i)}} \bigcup B^{(i+1)}_{q^{(i+1)}}}$ then 
$(c',k_{i+1})$ is accessible from $(c,k_{i})$ for $\sigma$ iff:
\begin{enumerate}
\item $(c'_{|\pi}, |\pi|+1)$ is accessible from $(c_{|\pi}, \sharp (D^{(i)} \bigcup B_{p^{(i)}}^{(i)}) +1)$ for $\pi$.
\item $\forall j < min(p^{(i)},q^{(i+1)})$, $c_{|B_{j}^{(i)}} = c'_{|B_{j}^{(i)}}$.
\item $\forall j > q^{(i+1)}, c'_{|B_{j}^{(i+1)}}$ is a reachable configuration.
\end{enumerate}
\end{lem}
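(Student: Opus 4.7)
Plan: I will prove both directions of the biconditional. The underlying structure is that the $\ominus$-decomposition splits each stack into independent horizontal strata: the top blocks (indices $j<\min(p^{(i)},q^{(i+1)})$) sit above the boundary and remain frozen between $t_i$ and $t_{i+1}$; the middle boundary blocks $B^{(i)}_{p^{(i)}}$ and $B^{(i+1)}_{q^{(i+1)}}$ are where the genuine interaction happens and are captured by $\pi$; and the bottom blocks (indices $j>q^{(i+1)}$) contain only elements pushed fresh from the input during the interval. Each of the three conditions targets exactly one of these strata.

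For $(\Rightarrow)$, let $w$ realize the accessibility. To obtain~(2), apply Lemma~\ref{lem:HautGaucheA} with $\sigma_\ell=\min A^{(i)}$ (which lies in $B^{(i)}_{p^{(i)}}$): every element of $B^{(i)}_j$ for $j<\min(p^{(i)},q^{(i+1)})\le p^{(i)}$ has strictly greater value and strictly smaller position than $\sigma_\ell$ by the $\ominus$-decomposition of $\sigma^{(i)}$, so it does not move between $t_i$ and $t_{i+1}$; combined with $B^{(i)}_j=B^{(i+1)}_j$ from Lemma~\ref{lem:blocks}, this yields $c_{|B^{(i)}_j}=c'_{|B^{(i)}_j}$. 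For~(3), every element of $B^{(i+1)}_j$ with $j>q^{(i+1)}$ has value strictly below $\min A^{(i)}$, hence lies outside $A^{(i)}$; being in $\sigma^{(i+1)}$, its position must lie in $[k_i,k_{i+1})$, so none of these elements are present at time $t_i$ and $w_{|B^{(i+1)}_j}$ (a stack word by Lemma~\ref{lem:restrictStackword}) drives empty stacks to $c'_{|B^{(i+1)}_j}$, which is therefore reachable. For~(1), $w_{|\pi}$ is a stack word by Lemma~\ref{lem:restrictStackword}; the $\pi$-elements already pushed at $t_i$ are those at positions $<k_i$, namely $B^{(i)}_{p^{(i)}}\cup(A^{(i)}\cap B^{(i+1)}_{q^{(i+1)}})=D^{(i)}\cup B^{(i)}_{p^{(i)}}$, so $w_{|\pi}$ starts at index $\sharp(D^{(i)}\cup B^{(i)}_{p^{(i)}})+1$; and since $\pi\subseteq\sigma^{(i+1)}$ is entirely pushed by $t_{i+1}$, it ends at index $|\pi|+1$.

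For $(\Leftarrow)$, I build a sorting sequence for $\sigma$ between $t_i$ and $t_{i+1}$ by interleaving the prescribed sub-words. Leave untouched the frozen blocks of~(2). Interleave the stack word $w_\pi$ of~(1) with each stack word $w_j$ of~(3), merging them so that the $\rho$-moves appear in the order forced by the positions in $\sigma$ and each $\lambda$- or $\mu$-move is performed at the moment its own sub-word prescribes. The residual operations that no sub-word covers, namely those handling $\sigma_{k_i}$, the elements of $\sigma^{(i)}\setminus A^{(i)}$ lying in $B^{(i)}_{j}$ for $j>p^{(i)}$, and the non-$A^{(i)}$ part of $B^{(i)}_{p^{(i)}}$, only act on top-of-stack elements and are supplied by the canonical popping procedure of Lemma~\ref{lem:popable}, since these elements lie strictly above the $\pi$-layer by Lemma~\ref{lem:oneUponTheOther}.

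The main obstacle is checking that this merged sequence is a legal stack process on $\sigma$: whenever it calls for $\lambda_x$, element $x$ must actually be at the top of $H$, and symmetrically for $\mu_x$ and $V$. The $\ominus$-decomposition partitions values into disjoint intervals, so Lemma~\ref{lem:oneUponTheOther} stratifies every poppable reachable configuration by block, and operations of a sub-word acting on block $B$ cannot place foreign elements on top of $B$'s current top element. The most delicate point is the middle block, where $w_\pi$ acts simultaneously on $B^{(i)}_{p^{(i)}}$ and $B^{(i+1)}_{q^{(i+1)}}$; here I would verify that any $w_j$-operation inserted between two $w_\pi$-moves only touches elements strictly above the $\pi$-layer, and that by \emph{isAccessible} applied to $\pi$ (Lemma~\ref{lem:compatibleRectangle} applies since $c_{|\pi}$ and $c'_{|\pi}$ satisfy its hypotheses within $\pi$) the sub-word $w_\pi$ remains internally consistent when lifted back to $\sigma$.
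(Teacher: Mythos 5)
Your proof is correct and follows essentially the same route as the paper's: the same stratification into frozen top blocks (via Lemmas~\ref{lem:blocks} and~\ref{lem:HautGaucheA}), the interacting middle layer $\pi$ handled by restriction of stack words (Lemma~\ref{lem:restrictStackword}), and the freshly pushed bottom blocks, with the converse assembled from Lemmas~\ref{lem:oneUponTheOther} and~\ref{lem:petit}. One bookkeeping slip in the converse: $B^{(i)}_{p^{(i)}}\setminus A^{(i)}$ is \emph{not} residual and does \emph{not} lie strictly above the $\pi$-layer --- it is part of $\pi$ (interleaved inside block $B^{(i)}_{p^{(i)}}$), and since $c'_{|\pi}$ excludes it, the word furnished by hypothesis~(1) already performs its output moves; only $\sigma_{k_i}$ and the blocks $B^{(i)}_{j}$ with $j>p^{(i)}$ need the separate popping step.
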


\begin{proof}
Suppose first that $(c',k_{i+1})$ is accessible from $(c,k_{i})$. 
This means that we can go from $c$ to $c'$ using operations represented by the decorated word $\hat{w}$. 
These operations are stable that is for all $I$, $c'_{|I}$ is accessible from $c_{|I}$.
To do so, we just extract operations corresponding to elements of $I$. Indeed the decorated word $\hat{w}_{|I}$ allow to transform $c$ into $c'$. 
This proves the first point of Lemma \ref{lem:compacc}.

Let $\sigma_{\ell} \in B_{p}^{(i)}$.
Lemma \ref{lem:HautGaucheA} ensures that elements of $B^{(i)}_{j}$ with $j < p^{(i)}$
do not move between $t_{i}$ and $t_{i+1}$ proving the second point of Lemma \ref{lem:compacc}.

Finally, elements of $B_{j}^{(i+1)}$ for $j > q^{(i+1)}$ are pushed iteratively when going from $c$ to $c'$. 
Those elements stay in the stacks as $\sigma_{k_{i+1}}$ which is smaller is pushed after them. 
Thus they correspond to a pushall configuration.

Conversely, suppose that we have the $3$ different points above, we must prove that $(c',k_{i+1})$ is accessible from $(c,k_{i})$ for $\sigma$.
We start by taking the stack configuration $c$ and we will prove that we can obtain $c'$ by moving elements. 
First of all, as $c$ is a pushall stack configuration, 
and as elements of $B_{\ell}$ for $\ell > p$ are the smallest one and have been pushed last into the stacks they are at the top of the stacks 
(see Lemma~\ref{lem:oneUponTheOther}). 
Thus we can pop them and output them in increasing order using Lemma~\ref{lem:petit}. 

The remaining elements in the stacks don't move in the preceding operation, thus stay in the same position than in $c$. 
In that configuration, elements of $B_{p^{(i)}}^{(i)}$ are the smallest ones and have been pushed the latter in the stacks. 
Hence they lie at the top of the stacks. 

Then using point $1$ of our hypothesis, we can move those elements together with pushing elements of $B^{(i+1)}_{q^{(i+1)}} \setminus B^{(i)}_{p^{(i)}}$ 
so that all those elements (that is elements of $\pi$) are in the same position than in $c'$. 
Then, by hypothesis item $3$, $\forall j > q^{(i+1)}, c'_{|B_{j}^{(i+1)}}$ is a reachable configuration. 
Thus we can push its elements into the stacks in the same relative order than in $c'$ (see Lemma~\ref{lem:oneUponTheOther}). 
During these operations we ensure that elements of $B_{\ell}$ with $\ell \geq min(p^{(i)},q^{(i+1)})$, $c_{|B_{j}^{(i)}}$ 
are in the same position in our configuration than in $c'$. 
Point $2$ ensures that we indeed obtain $c'$.
\end{proof}

The preceding Lemma describes exactly which elements can move between $t_{i}$ and $t_{i+1}$ and how they move. 
But the hypothesis of Lemma \ref{lem:compacc} are restrictive 
that is configurations $c$ and $c'$ must be two total stack configurations of $\sigma^{(i)}$ and $\sigma^{(i+1)}$. 
Thus, we first prove that among all sortings of a 2-stack sortable permutation, 
there exists at least one for which the stack configurations at time $t_{i}$ contains exactly the elements of $\sigma^{(i)}$ for all $i$. 

\begin{defn}[Properties $(P_{i})$ and $(P)$]\label{def:propP}
Let $\sigma$ be a permutation and $w$ a sorting word for $\sigma$. $w$ verifies $(P_{i})$ if and only if 
\begin{enumerate}[(i)]
\item  $\rho_{\sigma_{k_{i}}} \lambda_{\sigma_{k_{i}}} \mu_{\sigma_{k_{i}}}$ is a factor of $w$.
\item $\mu_{\sigma_{j}}$ appears before $\rho_{\sigma_{k_{i}}}$ for all $\sigma_{j} < \sigma_{k_{i}}$.
\item All operations $\mu_{\sigma_{\ell}}$ with $\sigma_{\ell} \in B_{j}^{(i)}$ and $j \in [p^{(i)}+1 .. s_{i}]$
appear before $\rho_{\sigma_{k_{i}+1}}$ in $w$.
\end{enumerate}
where $\sigma_{k_{i}}$ is the $i^{th}$ right to left minima of the permutation and $\sigma^{(i)} = \ominus [ B^{(i)}_{1}, \ldots, B^{(i)}_{s_{i}} ]$.

If a word $w$ verifies Property $(P_{i})$ for all $i$ then we say that $w$ verifies Property $(P)$.
\end{defn}

\begin{lem}\label{lem:P_i+t_i->sigma^(i)}
If the sorting word encoding a sorting process of $\sigma$ verifies Property $(P_i)$, 
then at time $t_i$ the elements currently in the stacks are exactly those of $\sigma^{(i)}$.
\end{lem}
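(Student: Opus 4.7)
The plan is to unpack the definition of $t_i$ (the moment immediately before $\rho_{\sigma_{k_i}}$ is performed) and then check, for each integer $\sigma_j$ of $\sigma$, whether it is in the stacks at that instant. The argument will use only parts (i) and (ii) of Property $(P_i)$, together with the fundamental constraint that a sorting word outputs the values in increasing order $1,2,\ldots,n$.

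First I would observe that at time $t_i$, the elements that have already been pushed are exactly $\sigma_1,\ldots,\sigma_{k_i-1}$, because the letters $\rho$ appear in input order in any sorting word. So the set of elements currently in the stacks is a subset of $\{\sigma_1,\ldots,\sigma_{k_i-1}\}$, namely those that have been pushed but not yet output. It remains to decide, for each $j<k_i$, whether $\sigma_j$ has been output before time $t_i$.

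I would then split on the comparison of $\sigma_j$ with $\sigma_{k_i}$. If $\sigma_j < \sigma_{k_i}$, then part (ii) of Property $(P_i)$ gives directly that $\mu_{\sigma_j}$ appears before $\rho_{\sigma_{k_i}}$, so $\sigma_j$ has been output before $t_i$ and is no longer in the stacks. Conversely, if $\sigma_j > \sigma_{k_i}$, I would argue that $\sigma_j$ cannot have been output before $t_i$: since the output is the identity, values must be output in increasing order, so outputting $\sigma_j$ requires outputting $\sigma_{k_i}$ first; but $\sigma_{k_i}$ has not even been pushed yet (its $\rho$ comes at time $t_i$), hence not output, a contradiction. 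Therefore $\sigma_j$ is still in the stacks.

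Combining both cases, the elements in the stacks at time $t_i$ are exactly $\{\sigma_j \mid j<k_i \text{ and } \sigma_j > \sigma_{k_i}\}$, which is precisely $\sigma^{(i)}$ by definition. There is no real obstacle here: the lemma is essentially a bookkeeping statement, and the only subtlety is recognising that the "upward" direction (values larger than $\sigma_{k_i}$ stay in the stacks) follows for free from the increasing-output requirement together with the fact that $\sigma_{k_i}$ has not yet entered the stacks, and does not need a separate hypothesis beyond $(P_i)$.
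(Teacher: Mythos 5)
Your proof is correct and follows essentially the same route as the paper's: elements in the stacks at time $t_i$ have index less than $k_i$, those with value below $\sigma_{k_i}$ have been output by item (ii) of $(P_i)$, and those with value above $\sigma_{k_i}$ cannot have been output because the output must be increasing and $\sigma_{k_i}$ has not yet entered the stacks. Your version merely spells out the bookkeeping in slightly more detail than the paper does.
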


\begin{proof}
By definition of time $t_i$ (just before $\sigma_{k_{i}}$ enters the stacks) each element in the stacks has an index smaller than $k_i$.
Moreover among elements of index smaller than $k_i$,
those of value greater than $\sigma_{k_{i}}$ cannot have been output by definition of a sorting,
and those of value smaller than $\sigma_{k_{i}}$ have already been output since $w$ satisfies item $(ii)$ of Property $(P_i)$.
\end{proof}

\begin{lem}\label{lem:i+ii+iii}
Let $w$ be a sorting word for a permutation $\sigma$, $r$ be the number of RTL-minima of $\sigma$ and $\ell \in [1..r]$.
If $w$ verifies $(P_{i})$ for $i \in [1 .. \ell-1]$ then there exists a sorting word $w'$ for $\sigma$ 
that verifies $(P_{i})$ for $i \in [1 .. \ell]$.
\end{lem}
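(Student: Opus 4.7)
The plan is to leave the prefix of $w$ up to time $t_\ell$ unchanged, so that the conditions $(P_i)$ for $i<\ell$ remain automatically witnessed, and to surgically replace the suffix so that $(P_\ell)$ also holds. Write $w=uv$ where $u$ is the prefix of $w$ that terminates immediately before $\rho_{\sigma_{k_\ell}}$, and let $c$ be the stack configuration after $u$. Because $\sigma_{k_\ell}$ is a right-to-left minimum, no value $<\sigma_{k_\ell}$ appears in the remaining input $\sigma_{k_\ell}\ldots\sigma_n$; the set $T$ of values $<\sigma_{k_\ell}$ still lying in $c$ is therefore exactly the set of small values that $v$ must still output before $\mu_{\sigma_{k_\ell}}$, and since the values already output by time $t_\ell$ form an initial segment $[1..m]$, the set $T$ is actually the interval $[m+1,\sigma_{k_\ell}-1]$.

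The crucial first step is to extract from $v$ a word $u'$ using only $\lambda$- and $\mu$-operations whose effect, executed from $c$, is to output every value of $T$ in increasing order, so that after $uu'$ the stacks contain exactly $\sigma^{(\ell)}$. The key observation making this plausible is that every $\rho$ performed in $v$ before $\mu_{\sigma_{k_\ell}}$ pushes a value $\geq\sigma_{k_\ell}$; such a value is strictly larger than any member of $T$ and hence cannot be needed to expose a $T$-element sitting in $H$. Concretely, $u'$ is taken to be the subsequence of $v$ consisting of those $\lambda$- and $\mu$-operations that either touch a member of $T$ or act on an element currently lying above some member of $T$ in $H$. The main obstacle is to verify step by step that this $\rho$-free subsequence remains legal when run from $c$, in particular that no pattern $12$ is ever created in $V$; this will be shown by an inductive analysis using the poppability conditions of Lemma~\ref{lem:popable} at the intermediate states visited by the original $w$, together with the fact that suppressing the omitted pushes only removes elements from the top of $H$ and therefore loosens, rather than tightens, the pattern constraints.

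Once $u'$ is in place, the word $uu'$ witnesses $(P_\ell)(ii)$; moreover every value then in $V$ exceeds $\sigma_{k_\ell}$, so the top of $V$ (if any) is $>\sigma_{k_\ell}$. The construction then appends the three operations $\rho_{\sigma_{k_\ell}}\lambda_{\sigma_{k_\ell}}\mu_{\sigma_{k_\ell}}$, which are all legal and consecutive and establish $(P_\ell)(i)$. To secure $(P_\ell)(iii)$, a further segment $u''$ of $\lambda$- and $\mu$-operations is added that outputs the lower blocks $B_{p^{(\ell)}+1}^{(\ell)},\ldots,B_{s_\ell}^{(\ell)}$ before any new $\rho$: by Lemma~\ref{lem:oneUponTheOther} these blocks occupy the top of the current stacks, and an iterated application of Lemma~\ref{lem:petit} to each corresponding interval of values produces $u''$.

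Finally, after $uu'\rho_{\sigma_{k_\ell}}\lambda_{\sigma_{k_\ell}}\mu_{\sigma_{k_\ell}}u''$ the stacks contain exactly $A^{(\ell)}=\sigma^{(\ell)}\cap\sigma^{(\ell+1)}$ and the remaining input is $\sigma_{k_\ell+1}\ldots\sigma_n$; this extended configuration is accessible from the state reached by $w$ at time $t_{\ell+1}$ (up to elements that would have been popped anyway), so a completion to the sorted output exists by sortability of $\sigma$, for instance via Lemma~\ref{lem:compacc} or by resuming an appropriate suffix of the original $w$. Concatenating this completion produces the desired word $w'$, which inherits $(P_1),\ldots,(P_{\ell-1})$ from the unchanged $u$ and satisfies the three items of $(P_\ell)$ by construction. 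The decisive obstacle remains the legality check of the $\rho$-free evacuation $u'$.
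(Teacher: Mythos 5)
Your overall strategy (freeze the prefix $u$ up to time $t_\ell$, then surgically rebuild the suffix to get items (ii), (i), (iii) of $(P_\ell)$ in that order) is the same as the paper's, but there is a genuine gap at exactly the point you flag yourself: the legality of the $\rho$-free evacuation $u'$. You propose to build $u'$ as a subsequence of $v$ containing the $\lambda$/$\mu$-operations touching $T$-elements \emph{or elements lying above a $T$-element in $H$}, and then to verify step by step that this word is legal; you admit you cannot close that verification. The missing idea is that this whole case analysis is unnecessary, because the configuration just before $\rho_{\sigma_{k_\ell}}$ cannot contain any element $x>\sigma_{k_\ell}$ above an element $y\in T$ in $H$: if it did, then after $\rho_{\sigma_{k_\ell}}$ the stack $H$ would contain $y,x,\sigma_{k_\ell}$ from bottom to top with $y<\sigma_{k_\ell}<x$, i.e.\ the pattern $132$, and by Lemma~\ref{lem:popable} the resulting configuration would not be poppable --- contradicting the fact that $w$ is a sorting word and reaches this configuration. (In $V$ the $T$-elements are automatically on top since $V$ is decreasing and they are the smallest elements present.) Hence the interval $T$ already sits at the top of both stacks, Lemma~\ref{lem:petit} applies verbatim, and the evacuation word is simply $v_{|T}=out_c(T)$, whose legality is guaranteed by that lemma; there is no separate legality check to perform, and no operations on non-$T$ elements are ever needed.

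A secondary weakness is your final step: you appeal to ``sortability of $\sigma$'' and a vague accessibility claim to produce the completion of $w'$ after the surgery. This is not needed and not quite justified as stated; Lemma~\ref{lem:petit} already hands you the completing suffix (namely $v_{|!I}$ for each interval $I$ you evacuate), so each of your three surgeries produces a genuine sorting word for $\sigma$, and the properties $(P_i)$ for $i<\ell$ are preserved because all the constrained letters lie in the untouched prefix. With the $132$-pattern observation inserted, your argument collapses onto the paper's proof.
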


\begin{proof}
Consider the sorting process of $\sigma$ encoded by $w$.
The key idea is to prove that the smallest elements are at the top of the stacks so that we can transform the word $w$ thanks to Lemma \ref{lem:petit}. 

Property ({\romannumeral 2}) for $(P_{\ell })$ states that $\mu_{\sigma_{j}}$ should appear before $\rho_{\sigma_{k_{\ell}}}$ for all $\sigma_{j} < \sigma_{k_{\ell}}$. 
Suppose that there still exists an element $\sigma_{j}$ with $\sigma_{j} < \sigma_{k_{\ell}}$ in the stacks just before $\sigma_{k_{\ell}}$ is pushed into the stacks.
We prove that this element can be popped out before  $\sigma_{k_{\ell}}$ is pushed. 
Let $\sigma_{j_{0}}$ be the smallest element still in the stacks just before $\rho_{\sigma_{k_i}}$. 
By definition, elements smaller than $\sigma_{j_{0}}$ have already been output. 
Consider interval $I = [\sigma_{j_{0}},\sigma_{k_{\ell}}-1]$. 
Those elements are still in the stacks. If they are at the top of the stacks they can be output using Lemma \ref{lem:petit}. 
If not, there exists in the stacks an element $x \notin I$ above an element $y \in I$.
As $V$ is decreasing, those elements are in $H$. 
Moreover $x > \sigma_{k_{\ell}} > y$. 
Then $\sigma_{k_{\ell}}$ cannot be pushed as it will create a pattern $132$ in $H$ with elements $x$ and $y$.
Thus $I$ is at the top of the stacks and we can output it before $\sigma_{k_{\ell}}$ is pushed onto $H$:
using Lemma~\ref{lem:petit}, we build from $w$ a sorting word $w^{(1)}$ for $\sigma$ satisfying $(P_{i})$ for $i \in [1 .. \ell-1]$ 
and Property ({\romannumeral 2}) of $(P_{\ell})$.
This means that $w^{(1)}$ can be decomposed as $w^{(1)} = u \rho_{\sigma_{k_{\ell}}} v$ 
such that the stack configuration $c_{\sigma}(u)$ respects the following constraint:
elements $1, \ldots, \sigma_{k_{\ell}}$ are not in the stacks.

So if we consider the stack configuration $c_{\sigma}(u \rho_{\sigma_{k_{\ell}}} )$, 
element $\sigma_{k_{\ell}}$ is at the top of $H$ and since
$out_{c_{\sigma}(u \rho_{\sigma_{k_{\ell}}} )} (\sigma_{k_{\ell}}) =  \lambda_{\sigma_{k_{\ell}}}  \mu_{\sigma_{k_{\ell}}} $
we can use Lemma \ref{lem:petit} to change the sorting word $w^{(1)}$ into a sorting word 
$w^{(2)}=u  \rho_{\sigma_{k_{\ell}}}  \lambda_{\sigma_{k_{\ell}}}  \mu_{\sigma_{k_{\ell}}} v'$,
satisfying Property ({\romannumeral 1}) for $(P_{\ell})$.

Now we show considering the stack configuration 
$c = c_{\sigma}({u \rho_{\sigma_{k_{\ell}}}  \lambda_{\sigma_{k_{\ell}}}  \mu_{\sigma_{k_{\ell}}} })$
how to transform the word $w^{(2)}$ into a word 
$w' = u \rho_{\sigma_{k_{\ell}}} \lambda_{\sigma_{k_{\ell}}} \mu_{\sigma_{k_{\ell}}} v^{(1)} v^{(2)}$
with $v^{(1)} = out_c( B_{p^{(\ell)}+1}^{(\ell)} \cup \dots \cup B_{s_{\ell}}^{(\ell)})$.
This will conclude the proof.

Notice that elements of $c$ are exactly those of $\sigma^{(\ell)}$ 
since the last operations performed are $\rho_{\sigma_{k_{\ell}}}  \lambda_{\sigma_{k_{\ell}}}  \mu_{\sigma_{k_{\ell}}}$
and elements are pushed in the stacks in increasing order of indices and output in increasing order of values.
Thus
$out_c( B_{p^{(\ell)}+1}^{(\ell)} \cup \dots \cup B_{s_{\ell}}^{(\ell)}) = out(B_{s_{\ell}}^{(\ell)}) \dots out(B_{p^{(\ell)}+1}^{(\ell)})$
(see Lemma~\ref{lem:oneUponTheOther}).
We show by induction on $j$ from $s_{\ell}$ to $p^{(i)}+1$ that we can build a sorting word for $\sigma$ of the form
$u \rho_{\sigma_{k_{\ell}}} \lambda_{\sigma_{k_{\ell}}} \mu_{\sigma_{k_{\ell}}} v^{(1,j)} v^{(2,j)}$ 
with $v^{(1,j)} = out(B_{s_{\ell}}^{(\ell)}) \dots out(B_{j}^{(\ell)})$.
For $j=s_{\ell}$ that is a word in which elements of block $B_{s_{\ell}}$ are output immediately after $\sigma_{k_{\ell}}$ has been output.
By definition of $s_{\ell}$ and because elements of $c$ are exactly those of $\sigma^{(\ell)}$,
all elements of $B_{s_{\ell}}$ lie in the stacks in configuration $c$, are the smallest elements in this configuration 
and lie at the top of the stacks in configuration $c$ (see Lemma~\ref{lem:oneUponTheOther}).
Hence, using Lemma \ref{lem:petit}, there exist a sorting word $w^{(3)}$ for $\sigma$ such that 
$w^{(3)} = u \rho_{\sigma_{k_{\ell}}}  \lambda_{\sigma_{k_{\ell}}}  \mu_{\sigma_{k_{\ell}}} out(B_{s_{\ell}}) v''$. 
Repeating this operation for all blocks $B_{j}$ with $j$ from $s_{\ell} - 1$ to $p^{(i)}+1$, we have Property ({\romannumeral 3}).
\end{proof}

Notice that Property $(P_0)$ is an empty property satisfied by any \sortingw.
Using recursively Lemma~\ref{lem:i+ii+iii} we can transform any sorting word into a sorting word satisfying Property $(P)$,
leading with Lemma~\ref{lem:P_i+t_i->sigma^(i)} to the following theorem:

\begin{thm}\label{theo:propP}
If $\sigma$ is $2$-stack sortable then there exists a sorting word of $\sigma$ respecting Property $(P)$. 
In particular, in the sorting process that this word encodes, 
the elements currently in the stacks at time $t_i$ are exactly those of $\sigma^{(i)}$.
\end{thm}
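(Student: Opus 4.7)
The plan is to prove Theorem~\ref{theo:propP} by a short induction on the index $i$, using Lemma~\ref{lem:i+ii+iii} as the engine and Lemma~\ref{lem:P_i+t_i->sigma^(i)} to translate Property~$(P)$ into the claim about stack contents at times $t_i$. Since essentially all the real work is already packaged in the two preceding lemmas, my job is just to iterate them correctly and assemble the conclusion.

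First, I would let $r$ denote the number of right-to-left minima of $\sigma$ and fix any sorting word $w^{(0)}$ for $\sigma$, which exists since $\sigma$ is $2$-stack sortable. Note that Property~$(P_0)$ is vacuous and so is satisfied by $w^{(0)}$. I would then apply Lemma~\ref{lem:i+ii+iii} inductively: assuming we have a sorting word $w^{(\ell-1)}$ for $\sigma$ satisfying $(P_i)$ for all $i \in [1..\ell-1]$, the lemma (taken with its $\ell$) provides a sorting word $w^{(\ell)}$ for $\sigma$ satisfying $(P_i)$ for all $i \in [1..\ell]$. After $r$ applications we obtain a sorting word $w := w^{(r)}$ for $\sigma$ satisfying $(P_i)$ for every $i \in [1..r]$, which is exactly Property~$(P)$.

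For the second assertion, I would apply Lemma~\ref{lem:P_i+t_i->sigma^(i)} to the sorting word $w$ produced above. Since $w$ satisfies $(P_i)$ for each $i$, the lemma states that at time $t_i$ the elements currently in the stacks are exactly those of $\sigma^{(i)}$, which is the ``in particular'' clause of the theorem.

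The only delicate point is bookkeeping: one must be sure that each application of Lemma~\ref{lem:i+ii+iii} preserves the previously established instances of $(P_i)$ rather than breaking them. But this is already built into the statement of the lemma, which explicitly says that the new word $w'$ satisfies $(P_i)$ on the full range $[1..\ell]$, not merely the new index. Hence the induction is clean and no hidden obstacle remains; the theorem follows immediately from the two lemmas.
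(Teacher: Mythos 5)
Your proposal is correct and follows exactly the paper's own argument: the paper likewise observes that $(P_0)$ is vacuous, applies Lemma~\ref{lem:i+ii+iii} recursively to upgrade an arbitrary sorting word to one satisfying $(P)$, and then invokes Lemma~\ref{lem:P_i+t_i->sigma^(i)} for the statement about stack contents at time $t_i$. No discrepancy to report.
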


Theorem~\ref{theo:propP} ensures that if a permutation is sortable then there exists a sorting in which 
at each time step $t_{i}$, elements in the stacks are exactly those of $\sigma^{(i)}$.
Thus stack configurations at time $t_{i}$ and $t_{i+1}$ satisfy hypothesis of Lemma~\ref{lem:compacc}
and we can apply it to decide if a permutation if \sortable.

\section{An iterative algorithm}\label{sec:graph}

\subsection{A fisrt naïve algorithm}

From Theorem~\ref{theo:propP} a permutation $\sigma$ is \sortable if and only if it admits a sorting process satisfying Property $(P)$.
The main idea is to compute the set of sorting processes of $\sigma$ satisfying Property $(P)$
and decide whether $\sigma$ is \sortable by testing its emptiness.

Verifying $(P)$ means verifying $(P_j)$ for all $j$ from $1$ to $r$, $r$ being the number of right-to-left minima 
(whose indices are denoted $k_{j}$).
The algorithm proceeds in $r$ steps: for all $i$ from $1$ to $r$ 
we iteratively compute the sorting processes of $\sigma_{\leq k_{i}}$ verifying $(P_\ell)$ for all $\ell$ from $1$ to $i$.
As $\sigma_{\leq k_{r}} = \sigma$, the last step gives sorting processes of $\sigma$ satisfying Property $(P)$.

By ``compute the sorting processes of $\sigma_{\leq k_{i}}$'' 
we mean compute the stack configuration just before $\sigma_{k_{i}}$ enters the stacks in such a sorting process.
Note that this is also the stack configuration just after $\sigma_{k_{i}}$ has been output
since $\rho_{\sigma_{k_{i}}} \lambda_{\sigma_{k_{i}}} \mu_{\sigma_{k_{i}}}$ is a factor of any word verifying $(P)$.

\begin{defn}\label{def:configP_i}
We call $P_i$-stack configuration of $\sigma$ a stack configuration $c_{\sigma}(w)$ for which there exists $u$ such that 
the first letter of $u$ is $\rho_{\sigma_{k_i}}$ and $wu$ is a sorting word of $\sigma_{\leq k_{i}}$ verifying $(P)$ for $\sigma_{\leq k_{i}}$
(that is, verifying $(P_\ell)$ for all $\ell$ from $1$ to $i$).
\end{defn}

\begin{lem}\label{lem:sigma_<k_iSortable}
For any $i$ from $1$ to $r$, $\sigma_{\leq k_{i}}$ is \sortable if and only if the set of $P_i$-stack configurations of $\sigma$ is nonempty.
In particular, $\sigma$ is \sortable if and only if the set of $P_r$-stack configurations of $\sigma$ is nonempty.
\end{lem}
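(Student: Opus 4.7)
The plan is to recognize that this lemma is essentially a dictionary translation between ``$\sigma_{\leq k_i}$ is sortable'' and ``there exists a $P_i$-stack configuration of $\sigma$'', bridged by Theorem~\ref{theo:propP}. The second assertion then follows as the special case $i=r$ since $k_r=n$ gives $\sigma_{\leq k_r}=\sigma$.

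First I would record a preliminary fact: the RTL minima of $\sigma_{\leq k_i}$ are exactly $\sigma_{k_1},\ldots,\sigma_{k_i}$, with the same indices as in $\sigma$. The values of RTL minima of $\sigma$ are strictly increasing with their indices (since each is smaller than every element to its right, including later RTL minima), so $\sigma_{k_1},\ldots,\sigma_{k_i}$ remain RTL minima after truncation; conversely, if some other $\sigma_j$ with $j<k_i$ became a new RTL minimum of $\sigma_{\leq k_i}$, then $\sigma_{k_i}$ (which sits to its right and is smaller than every non-RTL-minimum value) would witness a contradiction. This identification makes ``$(P)$ for $\sigma_{\leq k_i}$'' literally the conjunction of $(P_1),\ldots,(P_i)$ as used in the definition of $P_i$-stack configuration.

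For the forward direction, I would apply Theorem~\ref{theo:propP} to the sortable permutation $\sigma_{\leq k_i}$ to get a sorting word $w'$ satisfying $(P)$. Item $(i)$ of $(P_i)$ forces $\rho_{\sigma_{k_i}}\lambda_{\sigma_{k_i}}\mu_{\sigma_{k_i}}$ to be a factor of $w'$, so I split $w'=wu$ at the position where this factor starts. Then $w$ uses only letters indexed by $\sigma_j$ with $j<k_i$, and its $\rho$-letters push exactly the first $k_i-1$ entries of the input, which are identical in $\sigma$ and $\sigma_{\leq k_i}$. Hence $w$ is a stack word applicable to $\sigma$ and $c_\sigma(w)=c_{\sigma_{\leq k_i}}(w)$ is well-defined; by construction $(w,u)$ witnesses that $c_\sigma(w)$ is a $P_i$-stack configuration of $\sigma$.

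The converse is immediate: unfolding the definition, any $P_i$-stack configuration provides a pair $(w,u)$ such that $wu$ is a sorting word of $\sigma_{\leq k_i}$, so $\sigma_{\leq k_i}$ is $2$-stack sortable. The main obstacle, if any, is the bookkeeping in the forward direction: one must verify that the prefix $w$ extracted from a sorting of $\sigma_{\leq k_i}$ truly defines the same stack configuration when applied to $\sigma$, which comes down to the observation that no letter of $w$ touches an input element of index $\geq k_i$. The case $i=r$ then gives the ``in particular'' statement.
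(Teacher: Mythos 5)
Your proof is correct and follows the same route as the paper, which simply states the lemma is a direct consequence of Definition~\ref{def:configP_i} and Theorem~\ref{theo:propP}; you merely spell out the bookkeeping (the identification of the RTL minima of $\sigma_{\leq k_i}$ with $\sigma_{k_1},\ldots,\sigma_{k_i}$ and the fact that the prefix $w$ acts identically on $\sigma$ and $\sigma_{\leq k_i}$) that the paper leaves implicit.
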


\begin{proof}
This is a direct consequence of Definition~\ref{def:configP_i} and Theorem~\ref{theo:propP}.
\end{proof}

\begin{lem}\label{lem:configP_i}
Any $P_i$-stack configuration of $\sigma$ is a pushall stack configuration of $\sigma^{(i)}$
accessible from some $P_{i-1}$-stack configurations of $\sigma$.
\end{lem}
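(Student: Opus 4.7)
The plan is to verify separately that $c$ is a pushall configuration of $\sigma^{(i)}$ and that it is accessible from some $P_{i-1}$-stack configuration, by unpacking the witnessing sorting word from Definition~\ref{def:configP_i} and applying Lemma~\ref{lem:restrictStackword}.

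Let $wu$ be a sorting word of $\sigma_{\leq k_i}$ witnessing that $c = c_\sigma(w)$ is a $P_i$-stack configuration, so that $wu$ satisfies $(P)$ for $\sigma_{\leq k_i}$ and $u$ begins with $\rho_{\sigma_{k_i}}$. The RTL minima of $\sigma_{\leq k_i}$ coincide with the first $i$ RTL minima of $\sigma$, so $\sigma_{\leq k_i}^{(i)} = \sigma^{(i)}$; applying Lemma~\ref{lem:P_i+t_i->sigma^(i)} to $wu$ at time $t_i$ then shows that the elements held by $c$ are exactly those of $\sigma^{(i)}$, giving totality, while reachability holds by construction. For poppability, I observe that after the prefix $w \rho_{\sigma_{k_i}} \lambda_{\sigma_{k_i}} \mu_{\sigma_{k_i}}$ the stacks again contain exactly the elements of $\sigma^{(i)}$ arranged as in $c$, all elements of $\sigma_{\leq k_i}$ have already been pushed, and the remainder of $u$ therefore consists only of $\lambda$ and $\mu$ operations that empty $c$ in increasing order; this exhibits the pop sequence required by Lemma~\ref{lem:popable} and concludes that $c$ is a pushall stack configuration of $\sigma^{(i)}$.

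For the accessibility claim (assuming $i \geq 2$; the case $i=1$ is immediate with the empty initial configuration playing the role of a $P_0$-stack configuration), I let $w'$ denote the prefix of $w$ ending just before the letter $\rho_{\sigma_{k_{i-1}}}$ and set $c' = c_\sigma(w')$. Since $w'$ only pushes elements of index strictly less than $k_{i-1}$, restricting $wu$ to the indices $\{1, \ldots, k_{i-1}\}$ via Lemma~\ref{lem:restrictStackword} produces a sorting word of $\sigma_{\leq k_{i-1}}$ of the form $w' \rho_{\sigma_{k_{i-1}}} \lambda_{\sigma_{k_{i-1}}} \mu_{\sigma_{k_{i-1}}} v$, where $v$ is the subword of the tail of $wu$ acting on the elements of $\sigma^{(i-1)}$. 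I would then check that this restricted word satisfies $(P_\ell)$ for every $\ell \leq i-1$: for $\ell < i-1$ the $\ominus$-decomposition of $\sigma^{(\ell)}$ and the index $p^{(\ell)}$ depend only on $\sigma_{k_\ell}$ and $\sigma_{k_{\ell+1}}$, both of which still live in $\sigma_{\leq k_{i-1}}$, so each clause of $(P_\ell)$ is inherited from $wu$; for $\ell = i-1$ clauses (i) and (ii) are preserved by construction while clause (iii) is vacuous since $\sigma_{k_{i-1}}$ is the last RTL minimum of $\sigma_{\leq k_{i-1}}$. Hence $c'$ is a $P_{i-1}$-stack configuration of $\sigma$, and the operations of $wu$ performed strictly between the prefixes $w'$ and $w$ are operations of a valid sorting word and so form a valid partial sorting of $\sigma$ sending $(c', k_{i-1})$ to $(c, k_i)$, establishing that $c$ is accessible from $c'$.

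The main delicate point will be the verification that Property $(P)$ is preserved by the restriction to $[1 \ldots k_{i-1}]$, in particular that the block decomposition and the indices $p^{(\ell)}$ used in $(P_\ell)$ depend only on data already visible inside $\sigma_{\leq k_{i-1}}$, and the boundary case $\ell = i-1$ where clause (iii) must be read as vacuously satisfied in the absence of a subsequent RTL minimum.
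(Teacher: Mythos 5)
Your proposal is correct and follows the same route as the paper: the paper's (two-line) proof obtains the pushall property from Lemma~\ref{lem:P_i+t_i->sigma^(i)} and the accessibility by taking exactly the prefix of $w$ ending just before $\rho_{\sigma_{k_{i-1}}}$. You merely spell out the details the paper leaves implicit, in particular that restricting the witnessing word to the elements of $\sigma_{\leq k_{i-1}}$ still satisfies $(P_\ell)$ for $\ell \leq i-1$, with clause (iii) vacuous at the last RTL minimum.
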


\begin{proof}
By definition of $(P)$, each $P_i$-stack configurations of $\sigma$ is accessible from some 
$P_{i-1}$-stack configurations of $\sigma$ (take the prefix of $w$ that ends just before $\rho_{\sigma_{k_{i-1}}}$).
Moreover it is a pushall stack configuration of $\sigma^{(i)}$ from Lemma~\ref{lem:P_i+t_i->sigma^(i)}.
\end{proof}


As explained above, the algorithm proceeds in $r$ steps such that
after step $i$ we know every $P_i$-stack configuration of $\sigma$
and we want to compute the $P_{i+1}$-stack configurations of $\sigma$ at step $i+1$. 
As configurations for $i+1$ are a subset of pushall stack configurations of $\sigma^{(i+1)}$,
a possible algorithm is to take every pair of configurations $(c,c')$ 
with $c$ being a $P_i$-stack configuration of $\sigma$ (computed at step $i$)
and $c'$ be any pushall stack configuration of $\sigma^{(i+1)}$ (given by Algorithm 5 of \cite{PR13}). 
Then we can use Algorithm~\ref{algo:compatibleRectangle} to decide whether $c'$ is accessible from $c$ for $\sigma$.
This leads to the following algorithm deciding whether a permutation $\sigma$ is \sortable:

\begin{algorithm}[H]
\KwData{$\sigma$ a permutation}
\KwResult{{\tt true} or {\tt false} depending whether $\sigma$ is 2-stack sortable}
\Begin{
$E,F$ two empty sets\;
$E \leftarrow$ PushallConfigs($\sigma^{(1)}$)\;
\For{$i$ from $2$ to $r$}{
$F \leftarrow \varnothing$\;
\For{$c$ in $E$}{
        \For{$c'$ in PushallConfigs($\sigma^{(i)}$)}{
                \If{isAccessible($(c,k_i),(c',k_{i+1}),\sigma$)}{
                        $F \leftarrow F \cup {c'}$\;
                }
}
}
$E \leftarrow F$\;
}
\eIf{$E$ is empty}{
\Return {\tt false}\;
}{
\Return {\tt true}\;
}
}
\caption{$isSortableNaive$}
\label{algo:naif}
\end{algorithm}

Notice that at step $i$, the set $E$ computed contains all $P_i$-stack configurations of $\sigma$ but may contain some other configurations.
However since each configuration of $E$ is a pushall configuration of $\sigma^{(i)}$ 
and is accessible for $\sigma$ from some pushall configurations of $\sigma^{(i-1)}$,
each configuration of $E$ indeed corresponds to some sorting procedure of $\sigma_{\leq k_{i}}$,
proving the correctness of Algorithm~\ref{algo:naif}.

But this algorithm is not polynomial.
Indeed the number of $P_i$-stack configurations of $\sigma$ is possibly exponential. 
However this set can be described by a polynomial representation as a graph $\G{i}$ 
and we can adapt Algorithm~\ref{algo:naif} to obtain a polynomial algorithm.
In this adapted algorithm, the set $E$ computed at step $i$ is exactly the set of $P_i$-stack configurations of $\sigma$.

\subsection{Towards the sorting graph}

We now explain how to adapt Algorithm~\ref{algo:naif} to obtain a polynomial algorithm.
Instead of computing all $P_i$-stack configurations of $\sigma$ (which are pushall stack configurations of $\sigma^{(i)}$), 
we compute the restriction of such configurations to blocks $B_j^{(i)}$ of the $\ominus$-decomposition of $\sigma^{(i)}$.
By Lemma \ref{lem:oneUponTheOther}, those configurations are stacked one upon the others. 
The stack configurations of any block $B_j^{(i)}$ are labeled with an integer which is assigned when the configuration is computed. 
Those pairs configurations / integer will be the vertices of the graph $\G{i}$ which we call a {\em sorting graph}, 
the edges of which representing the configurations that can be stacked one upon the other. 
Vertices of the graph $\G{i}$ are partitioned into levels corresponding to blocks $B_j^{(i)}$.
To ensure the polynomiality of the representation, we will prove that a given integer label could only appear once per level of the graph $\G{i}$. 
As those numbers are assigned to configurations when they are created, each integer corresponding to a pushall stack configuration, 
from \cite{PR13} there exists only a polynomial number of disctincts integers thus of vertices. 
This will be explained in details in the next section. 
The integer indeed can be seen as the memory of the configuration that encodes its history since it has been created:
two configurations which have the same label come from the same initial pushall configuration.

More precisely  a sorting graph $\G{i}$ for a permutation $\sigma$ of size $n$ and an index $i$ verifies the following properties:
 \begin{itemize}\label{def:sortingGraph}
 \item Vertices of $\G{i}$ are partitioned into $s_{i}$ subsets $V_{j}^{(i)}$ with $j \in [1\ldots s_{i}]$.
 \item For any $j \in [1\ldots s_{i}], |V_{j}^{(i)}| \leq 9 n + 2$.
 \item Each vertex $v \in \G{i}$ is a pair \gConf{c}{\ell} with $c$ a stack configuration and \gLabel{\ell} an index called {\em configuration index}.
 \item All configuration indices are distinct inside a graph level $V_{j}^{(i)}$
 \item \gConf{c}{\ell}$\in V_{j}^{(i)} \Rightarrow c$ is a pushall stack configuration of $B_{j}^{(i)}$ accessible for $\sigma$.
 \item There are  edges only between adjacent blocks $V_{j}^{(i)}$, $V_{j+1}^{(i)}$.
 \item Paths between vertices of $V_{1}^{(i)}$ and $V_{s_{i}}^{(i)}$ corresponds to stack configurations of $\sigma^{(i)}$.
More precisely such paths are in one-to-one correspondence with $P_i$-stack configurations of $\sigma$ 
(that is, stack configurations corresponding to a sorting of $\sigma_{\leq k_{i}}$ respecting $(P)$ just before $\sigma_{k_{i}}$ is pushed onto $H$).
 \item For any vertex $v$ of $\G{i}$, there is a path between vertices of $V_{1}^{(i)}$ and $V_{s_{i}}^{(i)}$ going through $v$.
\end{itemize}

Though the definition of sorting graph is complex, its use will be quite understandable and easy. 
Look for example at the permutation $\sigma = 4 3 2 1$. 
There is only one right to left minimum which is $1$.
Compute all possible stack configurations just after $1$ enters $H$.
At this time, all elements are in the stacks since the first element which must be output is $1$. 
More formally, we are looking at the pushall stack configurations of $\sigma$ with $1$ in $H$.

There are $8$ different such configurations which are:

\begin{center}
\begin{tikzpicture}[scale=.3]
\begin{scope}[shift={(0,0)}]
\ssConf{4,3,2,1}{}
\end{scope}
\begin{scope}[shift={(5,0)}]
\ssConf{4,3,1}{2}
\end{scope}
\begin{scope}[shift={(10,0)}]
\ssConf{4,2,1}{3}
\end{scope}
\begin{scope}[shift={(15,0)}]
\ssConf{3,2,1}{4}
\end{scope}
\begin{scope}[shift={(20,0)}]
\ssConf{2,1}{4,3}
\end{scope}
\begin{scope}[shift={(25,0)}]
\ssConf{3,1}{4,2}
\end{scope}
\begin{scope}[shift={(30,0)}]
\ssConf{4,1}{3,2}
\end{scope}
\begin{scope}[shift={(35,0)}]
\ssConf{1}{4,3,2}
\end{scope}
\end{tikzpicture}
\end{center}

The $\ominus$-decomposition of $\sigma$ is $\sigma = \ominus[4,3,2,1]$.
We build a graph with $4$ levels, each level corresponding to pushall stack configurations of a block.

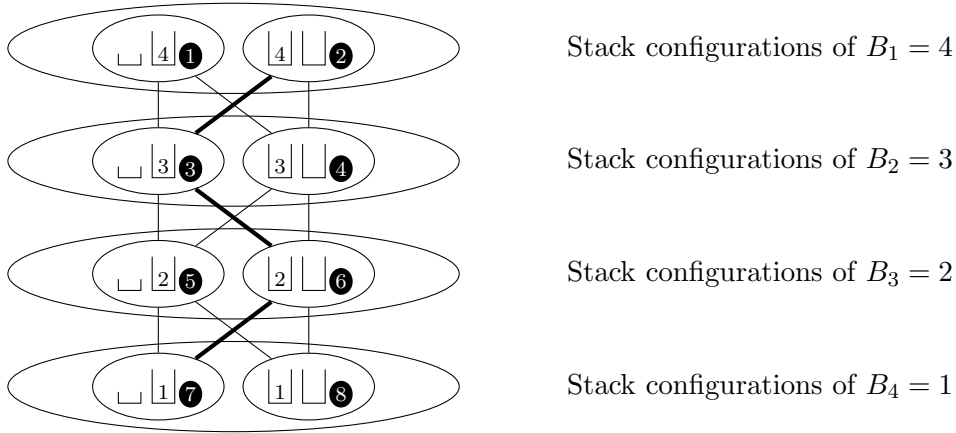
\begin{figure}[H]
\begin{center}
\begin{tikzpicture}
\begin{scope}[shift={(2,0)}]
\ssConfLab{1}{}{c1}{7};
\end{scope}
\begin{scope}[shift={(4,0)}]
\ssConfLab{}{1}{c2}{8};
\end{scope}
\begin{scope}[shift={(2,1.5)}]
\ssConfLab{2}{}{c3}{5};
\end{scope}
\begin{scope}[shift={(4,1.5)}]
\ssConfLab{}{2}{c4}{6};
\end{scope}
\begin{scope}[shift={(2,3)}]
\ssConfLab{3}{}{c5}{3};
\end{scope}
\begin{scope}[shift={(4,3)}]
\ssConfLab{}{3}{c6}{4};
\end{scope}
\begin{scope}[shift={(2,4.5)}]
\ssConfLab{4}{}{c7}{1};
\end{scope}
\begin{scope}[shift={(4,4.5)}]
\ssConfLab{}{4}{c8}{2};
\end{scope}
\draw (c1) -- (c3);
\draw[ultra thick] (c1) -- (c4);
\draw (c2) -- (c3);
\draw (c2) -- (c4);
\draw (c5) -- (c3);
\draw[ultra thick] (c5) -- (c4);
\draw (c6) -- (c3);
\draw (c6) -- (c4);
\draw (c5) -- (c7);
\draw[ultra thick] (c5) -- (c8);
\draw (c6) -- (c7);
\draw (c6) -- (c8);
\draw(3,0) ellipse (3 and .6) node [shift={(7,0)}] {Stack configurations of $B_{4} = 1$}; 
\draw (3,1.5) ellipse (3 and .6) node [shift={(7,0)}] {Stack configurations of $B_{3} = 2$}; ;
\draw (3,3) ellipse (3 and .6) node [shift={(7,0)}] {Stack configurations of $B_{2} = 3$}; ;
\draw (3,4.5) ellipse (3 and .6) node [shift={(7,0)}] {Stack configurations of $B_{1} = 4$}; ;
\end{tikzpicture}
\caption{Graph encoding pushall stack configurations of $\sigma = 4321$.}
\label{fig:graphe}
\end{center}
\end{figure}

Then the $8$ configurations are found taking each of the $8$ different paths going from any configuration of $B_{1}$ 
to configuration \tikz[scale=.3,baseline]\ssConf{1}{}; of $B_{4}$. 
In Figure \ref{fig:graphe}, the thick path gives the stack configuration \tikz[scale=.3,baseline]\ssConf{3,1}{4,2}; 
by stacking the selected configuration of $B_{4}$ above the configuration of $B_{3}$ and so on.

But in the last level $B_{4}$ we only consider configuration \tikz[scale=.3,baseline]\ssConf{1}{};
so this level is useless.
The sorting graph \G{1} for $\sigma = 4 3 2 1$ encodes pushall stack configurations of $\sigma^{(1)} = 432$,
corresponding to stack configurations just {\em before} $1$ enters $H$ (and not after as above).

There are $8$ different such configurations which are:

\begin{center}
\begin{tikzpicture}[scale=.3]
\begin{scope}[shift={(0,0)}]
\ssConf{4,3,2}{}
\end{scope}
\begin{scope}[shift={(5,0)}]
\ssConf{4,3}{2}
\end{scope}
\begin{scope}[shift={(10,0)}]
\ssConf{4,2}{3}
\end{scope}
\begin{scope}[shift={(15,0)}]
\ssConf{3,2}{4}
\end{scope}
\begin{scope}[shift={(20,0)}]
\ssConf{2}{4,3}
\end{scope}
\begin{scope}[shift={(25,0)}]
\ssConf{3}{4,2}
\end{scope}
\begin{scope}[shift={(30,0)}]
\ssConf{4}{3,2}
\end{scope}
\begin{scope}[shift={(35,0)}]
\ssConf{}{4,3,2}
\end{scope}
\end{tikzpicture}
\end{center}

As the $\ominus$-decomposition of $\sigma^{(1)}$ is $\sigma^{(1)} = \ominus[4,3,2]$, the sorting graph $\G{1}$ has $3$ levels.

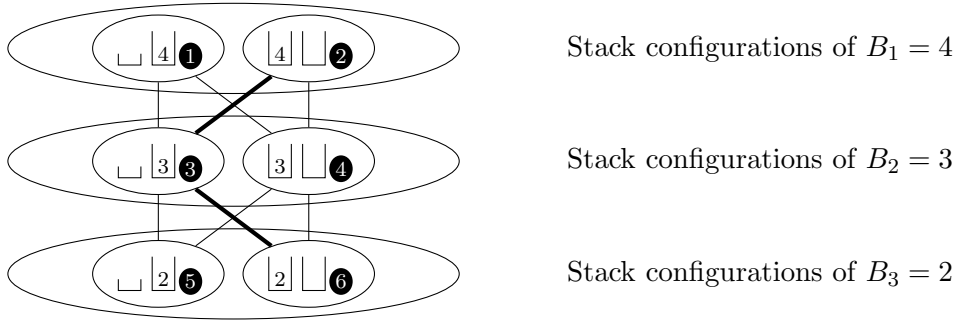
\begin{figure}[H]
\begin{center}
\begin{tikzpicture}
\begin{scope}[shift={(2,1.5)}]
\ssConfLab{2}{}{c3}{5};
\end{scope}
\begin{scope}[shift={(4,1.5)}]
\ssConfLab{}{2}{c4}{6};
\end{scope}
\begin{scope}[shift={(2,3)}]
\ssConfLab{3}{}{c5}{3};
\end{scope}
\begin{scope}[shift={(4,3)}]
\ssConfLab{}{3}{c6}{4};
\end{scope}
\begin{scope}[shift={(2,4.5)}]
\ssConfLab{4}{}{c7}{1};
\end{scope}
\begin{scope}[shift={(4,4.5)}]
\ssConfLab{}{4}{c8}{2};
\end{scope}
\draw (c5) -- (c3);
\draw[ultra thick] (c5) -- (c4);
\draw (c6) -- (c3);
\draw (c6) -- (c4);
\draw (c5) -- (c7);
\draw[ultra thick] (c5) -- (c8);
\draw (c6) -- (c7);
\draw (c6) -- (c8);
\draw (3,1.5) ellipse (3 and .6) node [shift={(7,0)}] {Stack configurations of $B_{3} = 2$}; ;
\draw (3,3) ellipse (3 and .6) node [shift={(7,0)}] {Stack configurations of $B_{2} = 3$}; ;
\draw (3,4.5) ellipse (3 and .6) node [shift={(7,0)}] {Stack configurations of $B_{1} = 4$}; ;
\end{tikzpicture}
\caption{Sorting graph \G{1} of $\sigma = 4321$.}
\label{fig:sortingGraph}
\end{center}
\end{figure}

Then the $8$ configurations are found taking each of the $8$ different paths going from any configuration of $B_{1}$ 
to any configuration of $B_{3}$. 
In Figure \ref{fig:sortingGraph}, the thick path gives the stack configuration \tikz[scale=.3,baseline]\ssConf{3}{4,2}; 
by stacking the selected configuration of $B_{3}$ above the configuration of $B_{2}$ and so on.

We transform Algorithm~\ref{algo:naif} to a polynomial algorithm by computing at step $i$ not all $P_i$-stack configurations of $\sigma$,
but instead the sorting graph $\G{i}$ encoding them.
The graph $\G{i}$ is computed iteratively from the graph $\G{i-1}$ for any $i$ from $2$ to $r$.
The way $\G{i}$ is computed from $\G{i-1}$ depends on the relative values of $p^{(i)}$ and $q^{(i+1)}$.
By definition of a sorting graph given p.\pageref{def:sortingGraph},
if at any step $\G{i}$ is empty, it means that $\sigma_{\leq k_i}$ is not sortable (from Theorem~\ref{theo:propP})
and so is $\sigma$ thus the algorithm returns {\tt false}.
This is summarized in Algorithm \ref{algo:total}.

\begin{algorithm}[H]
\KwData{$\sigma$ a permutation}
\KwResult{{\tt true} or {\tt false} depending whether $\sigma$ is 2-stack sortable}
\Begin{
${\mathcal G} \leftarrow ComputeG1$\;
\For{$i$ from $2$ to $r$}{
	\eIf{$p^{(i)} = q^{(i+1)}$}{
		${\mathcal G} \leftarrow iteratepEqualsq(\mathcal G)$ or \Return {\tt false}
	}{
	\eIf{$p^{(i)} < q^{(i+1)}$}{
		${\mathcal G} \leftarrow iteratepLessThanq(\mathcal G)$ or \Return {\tt false}
	}{
	${\mathcal G} \leftarrow iteratepGreaterThanq(\mathcal G)$ or \Return {\tt false}
	}
	}
}
\Return {\tt true}
}
\caption{$isSortable$}
 \label{algo:total}
\end{algorithm}

In the next subsections we describe the subprocedures used in our main algorithm $isSortable(\sigma)$.

\subsection{First step: $\G{1}$}

In this subsection, we show how to compute the $P_1$-stack configurations of $\sigma$, that is,
the stack configurations corresponding to time $t_1$ for sorting words of $\sigma_{\leq k_{1}}$ that respect $(P)$ for $\sigma_{\leq k_{1}}$.

From Lemma~\ref{lem:configP_i}, such a stack configuration is a pushall stack configuration of $\sigma^{(1)}$.
Conversely since $\sigma_{k_{1}} = 1$, $\sigma^{(1)}=\sigma_{< k_{1}}$ and
each sorting word of $\sigma_{\leq k_{1}}$ respects $(P_1)$ for $\sigma_{\leq k_{1}}$.
Thus the set of $P_1$-stack configurations of $\sigma$ is the set of pushall stack configurations of $\sigma^{(1)}$.

By Proposition 4.7 of \cite{PR13}, these stack configurations are described by giving 
the set of stack configurations for each block of the $\ominus$-decomposition of $\sigma^{(1)}$. 
More precisely, with $\sigma^{(1)} =  \ominus[B_{1}^{(1)},\ldots, B^{(1)}_{s_{1}}]$ there is a bijection from
$pushallConfigs(B_{1}^{(1)}) \times \dots \times pushallConfigs(B_{s_1}^{(1)})$ onto $pushallConfigs(\sigma^{(1)})$
by stacking configurations one upon the other (as in Lemma~\ref{lem:oneUponTheOther}).
As a consequence, from Lemma~\ref{lem:sigma_<k_iSortable} $\sigma_{\leq k_{1}}$ is not sortable if and only if a set
$pushallConfigs(B_{j}^{(1)})$ is empty.

Moreover it will be useful to label the configurations computed 
so that we attach a distinct integer to each stack configuration when computed.

At this point, we have encoded all configurations corresponding to words respecting $P$ up to the factor $\rho_{1} \lambda_{1} \mu_{1}$.

The obtained graph is \G{1}. This step is summarized in Algorithm \ref{algo:step1}.

\begin{algorithm}[H]
\KwData{$\sigma$ a permutation, $num$ a global integer variable}
\KwResult{{\tt false} if $\sigma_{\leq k_{1}}$ is not sortable, the sorting graph $\G{1}$ otherwise.}
\Begin{
$E = \varnothing$\;
Compute $\sigma^{(1)}$ and its $\ominus$-decomposition $\ominus[B_{1}^{(1)},\ldots, B^{(1)}_{s_{1}}]$\;
\For{$j$ from  $1$ to $s_{1}^{(1)}$}{
	$V_{j}^{(1)} \leftarrow \varnothing$\;
	$S = pushallConfigs(B_{j}^{(1)})$\;
	\eIf{$S = \varnothing$}{
		\Return {\tt false}\;
	}{
		\For{$s \in S$}{
			$V_{j}^{(1)} \leftarrow V_{j}^{(1)} \bigcup \{ \gConf{s}{num}\}$\;
			$num \leftarrow num+1$\;
		}
		\If{$j > 1$}{
			$E = E \bigcup  \{(s,s'), s \in V_{j}^{(1)} , s' \in V_{j-1}^{(1)}\}$
		}
	}
}
\Return $\G{1} = (\underset{j \in [1 .. s_{1}^{(1)}]}{\bigcup} V_{j}^{(1)} , E)$
}
\caption{ComputeG1\label{algo:step1}}
\end{algorithm}

\subsection{From step $i$ to step $i+1$}

After step $i$ we know the graph $\G{i}$ encoding every $P_i$-stack configuration of $\sigma$
and we want to compute the graph $\G{i+1}$ encoding $P_{i+1}$-stack configurations of $\sigma$ at step $i+1$. 
From Lemma~\ref{lem:configP_i}
we have to check the accessibility of pushall stack configuration of $\sigma^{(i+1)}$ from $P_i$-stack configurations of $\sigma$.
We want to avoid to check every pair of configurations $(c,c')$ 
with $c$ being a $P_i$-stack configuration and $c'$ be a pushall stack configuration of $\sigma^{(i+1)}$
because the number of such pair of configurations is possibly exponential. 
Thus our algorithm focuses not on stack configurations of some $\sigma^{(\ell)}$
but on sets of stack configurations of blocks $B_j^{(\ell)}$,
making use of Lemma~\ref{lem:compacc}. 
Using Lemma~\ref{lem:configP_i}, Lemma~\ref{lem:compacc} can be rephrased as:

\begin{lem}\label{lem:evolution}
Let $c'$ be a total stack configuration of $\sigma^{(i+1)}$, $p = p^{(i)}$ and $q = q^{(i+1)}$.
Then $c'$ is a $P_{i+1}$-stack configuration of $\sigma$ if and only if:
\begin{itemize}
\item For any $j \leq q$, $c'_{|B_{j}^{(i+1)}}$ is a pushall stack configuration of $\sigma_{|B_{j}^{(i+1)}}$, and
\item There exists a $P_i$-stack configuration $c$ of $\sigma$ such that :
\begin{itemize}
\item $c'_{|B^{(i)}_{\min(p,q)} \cup \dots \cup  B^{(i)}_{q}}$ is accessible from $c_{|B^{(i+1)}_{\min(p,q)} \cup \dots \cup B^{(i+1)}_{p}}$ 
for $\sigma_{|B^{(i)}_{p} \bigcup B^{(i+1)}_{q}}$ and
\item $c'_{|B_{1}^{(i+1)} \cup \dots \cup B_{\min(p,q)-1}^{(i+1)}} = c_{|B_{1}^{(i)} \cup \dots \cup B_{\min(p,q)-1}^{(i)}}$
\end{itemize}
\end{itemize}
\end{lem}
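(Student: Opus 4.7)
The plan is to obtain Lemma~\ref{lem:evolution} as a translation exercise: combine Lemma~\ref{lem:configP_i}, which identifies $P_{i+1}$-stack configurations of $\sigma$ as the pushall stack configurations of $\sigma^{(i+1)}$ accessible (for $\sigma$) from some $P_i$-stack configuration, with Lemma~\ref{lem:compacc}, which expresses that accessibility via three conditions on the block restrictions. Since by Lemma~\ref{lem:P_i+t_i->sigma^(i)} any $P_j$-stack configuration is total, the hypotheses of Lemma~\ref{lem:compacc} are automatically met by the pair $(c,c')$ it produces. Most of the work then consists in matching the three items of Lemma~\ref{lem:compacc} with the two bullets of the present statement.

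For the forward direction I would start from a $P_{i+1}$-stack configuration $c'$ and invoke Lemma~\ref{lem:configP_i} to obtain a witness $P_i$-stack configuration $c$. The first bullet, that $c'_{|B_j^{(i+1)}}$ is a pushall of $\sigma_{|B_j^{(i+1)}}$ for $j\leq q$, follows from the fact that $c'$ is a pushall of $\sigma^{(i+1)}=\ominus[B_1^{(i+1)},\dots,B_{s_{i+1}}^{(i+1)}]$ together with Proposition~4.7 of \cite{PR13}, which says that a pushall of a $\ominus$-decomposable permutation decomposes blockwise. The second sub-bullet is point~2 of Lemma~\ref{lem:compacc}, rephrased using Lemma~\ref{lem:blocks} (which gives $B_j^{(i)}=B_j^{(i+1)}$ for $j<\min(p,q)$ so that the two restrictions make sense on the same set of elements). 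The first sub-bullet is exactly point~1 of Lemma~\ref{lem:compacc}, using $\pi=\sigma_{|B_p^{(i)}\cup B_q^{(i+1)}}$.

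For the converse I would construct a candidate $c$ from the data: set $c_{|B_j^{(i)}}=c'_{|B_j^{(i)}}$ for $j<\min(p,q)$ (licit by Lemma~\ref{lem:blocks}), use the accessibility hypothesis on $B_p^{(i)}\cup B_q^{(i+1)}$ to supply $c_{|B_p^{(i)}}$, and stack the resulting pushall pieces via Proposition~4.7 of \cite{PR13} to form a pushall of $\sigma^{(i)}$. Checking the three items of Lemma~\ref{lem:compacc} is now immediate: point~1 is the accessibility assumption, point~2 is the equality of low-index blocks, and point~3 is the pushall condition on each $c'_{|B_j^{(i+1)}}$ for $j>q$, which holds because for such $j$ the block $B_j^{(i+1)}$ is entirely absent from $\sigma^{(i)}$ (its elements lie in the strip between $\sigma_{k_i}$ and $\sigma_{k_{i+1}}$) so a pushall restriction is a reachable configuration. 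Lemma~\ref{lem:compacc} then yields accessibility of $c'$ from $c$, and Lemma~\ref{lem:configP_i} concludes that $c'$ is a $P_{i+1}$-stack configuration.

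The main obstacle is purely notational bookkeeping around the interface blocks $B_p^{(i)}$ and $B_q^{(i+1)}$: the restriction $c'_{|B_j^{(i)}}$ mixes a configuration of $\sigma^{(i+1)}$ with blocks of $\sigma^{(i)}$, so one must repeatedly appeal to Lemma~\ref{lem:blocks} to ensure the underlying sets of elements coincide where needed, and carefully track which blocks get pushed between times $t_i$ and $t_{i+1}$ versus which remain untouched. Once these identifications are in place, no new idea is required beyond the two lemmas quoted above.
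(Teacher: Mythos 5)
Your proposal matches the paper's approach exactly: the paper offers no separate proof of this lemma, introducing it only with the remark that, using Lemma~\ref{lem:configP_i}, Lemma~\ref{lem:compacc} ``can be rephrased as'' the present statement, and your argument spells out precisely that combination (blockwise decomposition of pushall configurations, Lemma~\ref{lem:blocks} for the low-index levels, and the accessibility criterion for the interface blocks). The only caveat is that your converse direction invokes Lemma~\ref{lem:configP_i} in the reverse sense --- that accessibility from a $P_i$-stack configuration suffices to make $c'$ a $P_{i+1}$-stack configuration --- which is not literally what that lemma states; but the paper relies on the same unstated converse, so your write-up is, if anything, more explicit than the original.
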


Recall that a $P_i$-stack configuration of $\sigma$ is encoded by a path in the sorting graph \G{i}, 
corresponding to the $\ominus$-decomposition of the permutation $\sigma^{(i)}$ into blocks $B_{j}^{(i)}$.
The last point of Lemma \ref{lem:evolution} ensures that the first levels ($1$ to $min(p^{(i)},q^{(i+1)})-1$) 
are the same in \G{i+1} than in \G{i}.
The first point of Lemma \ref{lem:evolution} ensures that the last levels ($>q^{(i+1)}$) of \G{i+1} 
form a complete graph whose vertices are all pushall stack configurations of corresponding blocks. 
So the only unknown levels for \G{i+1} are those between $\min(p^{(i)},q^{(i+1)})$ and $q^{(i+1)}$ and we can compute them by testing accessibility.

There are differents cases depending on the relative values of $p^{(i)}$ and $q^{(i+1)}$.
To lighten the notations in the following, we sometimes write $p$ (resp.~$q$) instead of $p^{(i)}$ (resp.~$q^{(i+1)}$).

\subsubsection{Case $p^{(i)} = q^{(i+1)}$}

If  $p^{(i)} = q^{(i+1)}$ then $B^{(i+1)}_{q^{(i+1)}} \cap A^{(i)} = B^{(i)}_{p^{(i)}} \cap A^{(i)}$ (see Figure~\ref{fig:casep=q}).

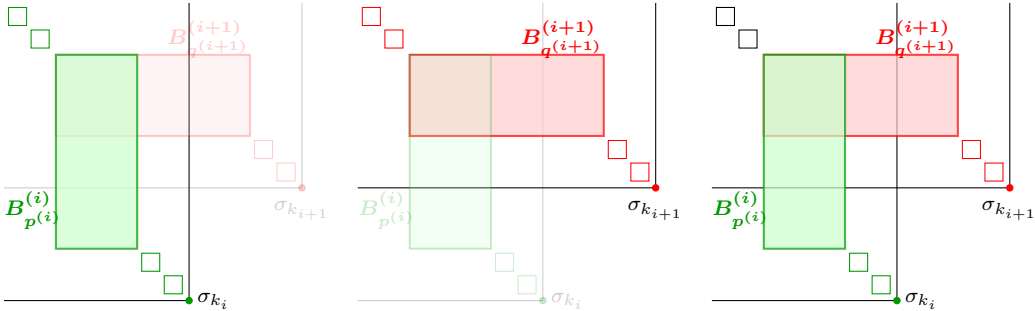
\begin{figure}[H]
\begin{center}
\begin{tikzpicture}[scale=.3]
\draw (-0.2,0) -- (8,0);
\draw [opacity=.2] (-0.2,5) -- (13,5);
\draw (8,0) -- (8,13.2);
\draw [opacity=.2] (13,5) -- (13,13.2);
\draw [green!60!black] (8,0) [fill] circle (4pt);
\draw (9.1,0) node {{\scriptsize $\sigma_{k_i}$}};
\draw [red,opacity=.2] (13,5) [fill] circle (4pt);
\draw [opacity=.2] (13,4) node {{\scriptsize $\sigma_{k_{i+1}}$}};
\draw [green!60!black] (6.9,1.1) rectangle +(0.8,-0.8);
\draw [green!60!black] (5.9,2.1) rectangle +(0.8,-0.8);
\draw [red,opacity=.2] (11.9,6.1) rectangle +(0.8,-0.8);
\draw [red,opacity=.2] (10.9,7.1) rectangle +(0.8,-0.8);
\draw [green!60!black] (1,12) rectangle +(0.8,-0.8);
\draw [green!60!black] (0,13) rectangle +(0.8,-0.8);
\draw[thick, red,fill=red!20,opacity=.2] (10.7,7.3) rectangle +(-8.6,3.6);
\draw[red,opacity=.2] (8.85,11.6) node {{\scriptsize \boldmath$B^{(i+1)}_{q^{(i+1)}}$}};
\draw[thick, green!60!black,fill=green!20,opacity=.7] (5.7,2.3) rectangle +(-3.6,8.6);
\draw[green!60!black] (1.1,4) node {{\scriptsize \boldmath$B^{(i)}_{p^{(i)}}$}};
\end{tikzpicture}
\begin{tikzpicture}[scale=.3]
\draw [opacity=.2] (-0.2,0) -- (8,0);
\draw  (-0.2,5) -- (13,5);
\draw [opacity=.2] (8,0) -- (8,13.2);
\draw (13,5) -- (13,13.2);
\draw [green!60!black,opacity=.2] (8,0) [fill] circle (4pt);
\draw [opacity=.2] (9.1,0) node {{\scriptsize $\sigma_{k_i}$}};
\draw [red] (13,5) [fill] circle (4pt);
\draw (13,4) node {{\scriptsize $\sigma_{k_{i+1}}$}};
\draw [green!60!black,opacity=.2] (6.9,1.1) rectangle +(0.8,-0.8);
\draw [green!60!black,opacity=.2] (5.9,2.1) rectangle +(0.8,-0.8);
\draw [red] (11.9,6.1) rectangle +(0.8,-0.8);
\draw [red] (10.9,7.1) rectangle +(0.8,-0.8);
\draw [red] (1,12) rectangle +(0.8,-0.8);
\draw [red] (0,13) rectangle +(0.8,-0.8);
\draw[thick, red,fill=red!20,opacity=.7] (10.7,7.3) rectangle +(-8.6,3.6);
\draw[red] (8.85,11.6) node {{\scriptsize \boldmath$B^{(i+1)}_{q^{(i+1)}}$}};
\draw[thick, green!60!black,fill=green!20,opacity=.2] (5.7,2.3) rectangle +(-3.6,8.6);
\draw[green!60!black,opacity=.2] (1.1,4) node {{\scriptsize \boldmath$B^{(i)}_{p^{(i)}}$}};
\end{tikzpicture}
\begin{tikzpicture}[scale=.3]
\draw (-0.2,0) -- (8,0);
\draw (-0.2,5) -- (13,5);
\draw (8,0) -- (8,13.2);
\draw (13,5) -- (13,13.2);
\draw [green!60!black] (8,0) [fill] circle (4pt);
\draw (9.1,0) node {{\scriptsize $\sigma_{k_i}$}};
\draw [red] (13,5) [fill] circle (4pt);
\draw (13,4) node {{\scriptsize $\sigma_{k_{i+1}}$}};
\draw [green!60!black] (6.9,1.1) rectangle +(0.8,-0.8);
\draw [green!60!black] (5.9,2.1) rectangle +(0.8,-0.8);
\draw [red] (11.9,6.1) rectangle +(0.8,-0.8);
\draw [red] (10.9,7.1) rectangle +(0.8,-0.8);
\draw (1,12) rectangle +(0.8,-0.8);
\draw (0,13) rectangle +(0.8,-0.8);
\draw[thick, red,fill=red!20,opacity=.7] (10.7,7.3) rectangle +(-8.6,3.6);
\draw[red] (8.85,11.6) node {{\scriptsize \boldmath$B^{(i+1)}_{q^{(i+1)}}$}};
\draw[thick, green!60!black,fill=green!20,opacity=.7] (5.7,2.3) rectangle +(-3.6,8.6);
\draw[green!60!black] (1.1,4) node {{\scriptsize \boldmath$B^{(i)}_{p^{(i)}}$}};
\end{tikzpicture}
\caption{Block decomposition of $\sigma^{(i)}$ and of $\sigma^{(i+1)}$ when $p^{(i)}=q^{(i+1)}$}
\label{fig:casep=q}
\end{center}
\end{figure}

We have the sorting graph $\G{i}$ encoding all $P_i$-stack configurations of $\sigma$
and we want to compute the sorting graph $\G{i+1}$ encoding all $P_{i+1}$-stack configurations of $\sigma$ assuming that 
$p^{(i)} = q^{(i+1)} = min(p^{(i)},q^{(i+1)})$.

In this case, from Lemma \ref{lem:evolution} we only have to check accessiblity of pushall configurations of $B_{q}^{(i+1)}$
from configurations of $B^{(i)}_{p}$ belonging to level $p$ of $\G{i}$.
Indeed from the definition of a sorting graph given p.\pageref{def:sortingGraph}, 
for any vertex $v$ of $\G{i}$ there is a path between vertices of $V_{1}^{(i)}$ and $V_{s_{i}}^{(i)}$ going through $v$,
and such a path corresponds to a $P_i$-stack configuations of $\sigma$. 
Thus for any configurations $x$ of $B^{(i)}_{p}$ belonging to a vertex $v$ of level $p$ of $\G{i}$, 
there is at least one $P_i$-stack configurations $c$ of $\sigma$ such that $c_{|{B^{(i)}_{p}}} = x$, and 
$c_{|B_{1}^{(i)} \cup \dots \cup B_{\min(p,q)-1}^{(i)}}$ is encoded by a path from $v$ to level $p$ of $\G{i}$
(which go through each level $< p$).

If there is no pushall configuration of $B_{q}^{(i+1)}$ accessible from 
some configurations of $B^{(i)}_{p}$ belonging to level $p$ of $\G{i}$, or if $\sigma^{(i+1)}$ has no pushall configuration,
then $\sigma$ has no $P_{i+1}$-stack configuration and $\sigma_{\leq k_{i+1}}$ is not sortable (from Lemma~\ref{lem:sigma_<k_iSortable}).

This leads to the following algorithm:
\begin{algorithm}[H]
\KwData{$\sigma$ a permutation and \G{i} the sorting graph at step $i$}
\KwResult{{\tt false} if $\sigma_{\leq k_{i+1}}$ is not sortable, the sorting graph $\G{i+1}$ otherwise.}
\Begin{
${\mathcal G}$ an empty sorting graph with $s_{i+1}$ levels\;
${\mathcal G}' \leftarrow ComputeG1(\sigma^{(i+1)})$ (pushall sorting graph of $\sigma^{(i+1)}$) or \Return {\tt false}\;
Copy levels $q+1 \ldots s_{i+1}$ of ${\mathcal G}'$ into the same levels of ${\mathcal G}$\;
\For{$\gConf{c}{\ell}$ in level $p$ of \G{i}}{
	${\mathcal H}$ the subgraph of \G{i} induced by $\gConf{c}{\ell}$ in levels $< p$\;
	\For{$\gConf{c'}{\ell'}$ in level $q$ of ${\mathcal G}'$}{
		\If{$isAccessible(c,c',\sigma_{|B^{(i)}_{p} \bigcup B^{(i+1)}_{q}})$}{
			 Add $\gConf{c'}{\ell'}$ in level $q$ of ${\mathcal G}$ (if not already done)\;
			 Merge ${\mathcal H}$ in levels $\leq q$ of ${\mathcal G}$ with $\gConf{c'}{\ell'}$ as origin\;
		}
	}
}
\If{level $q$ of ${\mathcal G}$ is empty}{
	\Return {\tt false}\;
}{
	\For{$\gConf{c'}{\ell'}$ in level $q$ of ${\mathcal G}$}{
	Add all edges from $\gConf{c'}{\ell'}$ to each vertex of level $q+1$ of ${\mathcal G}$;
	}
	\Return ${\mathcal G}$
} 
}
\caption{$iteratepEqualsq(\G{i})$ \label{algo:steppq}}
\end{algorithm}

\subsubsection{Case $p^{(i)} < q^{(i+1)}$}

If  $p^{(i)} < q^{(i+1)}$ then $B^{(i+1)}_{q^{(i+1)}} \cap A^{(i)} \varsubsetneq B^{(i)}_{p^{(i)}} \cap A^{(i)}$ (see Figure~\ref{fig:p<q}).

\begin{figure}[ht!]
\begin{center}
\begin{tikzpicture}[scale=.3]
\draw (-0.2,0) -- (8,0);
\draw [opacity=.2]  (-0.2,5) -- (13,5);
\draw (8,0) -- (8,13.2);
\draw [opacity=.2]  (13,5) -- (13,13.2);
\draw [green!60!black] (8,0) [fill] circle (4pt);
\draw (9.1,0) node {{\scriptsize $\sigma_{k_i}$}};
\draw [opacity=.2]  (13,5) [fill] circle (4pt);
\draw[opacity=.2] (13,4) node {{\scriptsize $\sigma_{k_{i+1}}$}};
\draw[thick, red!20, fill=red!20, opacity=.3] (10.7,7.3) rectangle +(-7.5,2.5);
\draw[red,opacity=.2] (8.85,10.5) node {{\scriptsize \boldmath$B^{(i+1)}_{q^{(i+1)}}$}};
\draw[thick, green!60!black, fill=green!20, opacity=.7] (5.7,2.3) rectangle
+(-4.78,9.78);
\draw[green!60!black] (2.7,1.4) node {{\scriptsize \boldmath$B^{(i)}_{p^{(i)}}$}};
\draw [green!60!black] (6.9,1.1) rectangle +(0.8,-0.8);
\draw [green!60!black] (5.9,2.1) rectangle +(0.8,-0.8);
\draw [opacity=.2] (11.9,6.1) rectangle +(0.8,-0.8);
\draw [opacity=.2] (10.9,7.1) rectangle +(0.8,-0.8);
\draw  [green!60!black] (0,13) rectangle +(0.8,-0.8);
\end{tikzpicture}
\begin{tikzpicture}[scale=.3]
\draw [opacity=.2](-0.2,0) -- (8,0);
\draw  (-0.2,5) -- (13,5);
\draw [opacity=.2](8,0) -- (8,13.2);
\draw  (13,5) -- (13,13.2);
\draw[opacity=.2,green!60!black ] (8,0) [fill] circle (4pt);
\draw[opacity=.2] (9.1,0) node {{\scriptsize $\sigma_{k_i}$}};
\draw [red]  (13,5) [fill] circle (4pt);
\draw  (13,4) node {{\scriptsize $\sigma_{k_{i+1}}$}};
\draw[thick, red, fill=red!20, opacity=.7] (10.7,7.3) rectangle +(-7.5,2.5);
\draw[red] (8.85,10.5) node {{\scriptsize \boldmath$B^{(i+1)}_{q^{(i+1)}}$}};
\draw[thick, green!60!black, fill=green!20, opacity=.2] (5.7,2.3) rectangle
+(-4.78,9.78);
\draw[green!60!black,opacity=.2] (2.7,1.4) node {{\scriptsize
\boldmath$B^{(i)}_{p^{(i)}}$}};
\draw [green!60!black,opacity=.2] (6.9,1.1) rectangle +(0.8,-0.8);
\draw [green!60!black, opacity=.2] (5.9,2.1) rectangle +(0.8,-0.8);
\draw  [red]  (11.9,6.1) rectangle +(0.8,-0.8);
\draw  [red] (10.9,7.1) rectangle +(0.8,-0.8);
\draw [red] (2.25,10.75) rectangle +(0.8,-0.8);
\draw [red] (1.25,11.75) rectangle +(0.8,-0.8);
\draw [red] (0,13) rectangle +(0.8,-0.8);
\end{tikzpicture}
\begin{tikzpicture}[scale=.3]
\draw (-0.2,0) -- (8,0);
\draw  (-0.2,5) -- (13,5);
\draw (8,0) -- (8,13.2);
\draw  (13,5) -- (13,13.2);
\draw[green!60!black ] (8,0) [fill] circle (4pt);
\draw  (9.1,0) node {{\scriptsize $\sigma_{k_i}$}};
\draw [red]  (13,5) [fill] circle (4pt);
\draw  (13,4) node {{\scriptsize $\sigma_{k_{i+1}}$}};
\draw[red] (8.85,10.5) node {{\scriptsize \boldmath$B^{(i+1)}_{q^{(i+1)}}$}};
\draw[thick, green!60!black, fill=green!20,opacity=.7] (5.7,2.3) rectangle
+(-4.78,9.78);
\draw[thick, red, fill=red!20, opacity=.7] (10.7,7.3) rectangle +(-7.5,2.5);
\draw[green!60!black] (2.7,1.4) node {{\scriptsize \boldmath$B^{(i)}_{p^{(i)}}$}};
\draw [green!60!black] (6.9,1.1) rectangle +(0.8,-0.8);
\draw [green!60!black] (5.9,2.1) rectangle +(0.8,-0.8);
\draw  [red]  (11.9,6.1) rectangle +(0.8,-0.8);
\draw  [red] (10.9,7.1) rectangle +(0.8,-0.8);
\draw [red] (2.25,10.75) rectangle +(0.8,-0.8);
\draw [red] (1.25,11.75) rectangle +(0.8,-0.8);
\draw (0,13) rectangle +(0.8,-0.8);
\end{tikzpicture}
\caption{Block decomposition of $\sigma^{(i)}$ and of $\sigma^{(i+1)}$ when $p^{(i)} < q^{(i+1)}$}
\label{fig:p<q}
\end{center}
\end{figure}
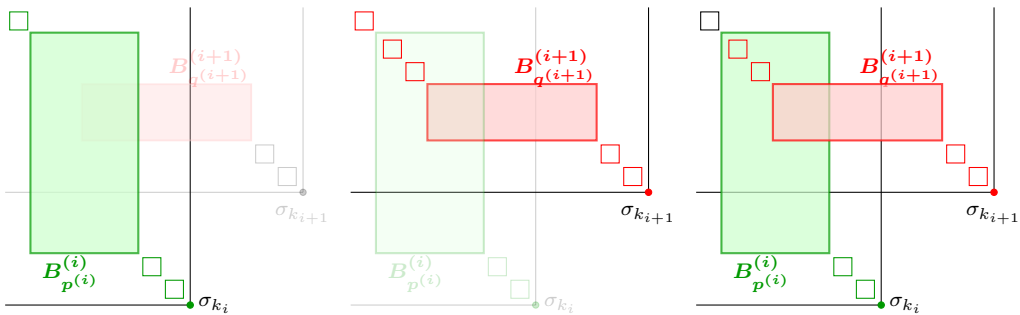

Again, Lemma \ref{lem:evolution} ensures that the first $p-1$ levels of \G{i+1} come from those of \G{i} and 
the levels $> q$ are all pushall stack configurations of the blocks $B^{(i+1)}_{> q}$ of $\sigma^{(i+1)}$. 
The difficult part is from level $p$ to level $q$. 
As in the preceding case, by Lemma \ref{lem:evolution}, 
we have to select among pushall stack configurations of blocks $p,p+1,\ldots, q$ of $\sigma^{(i+1)}$ 
those accessible from a configuration of $B^{(i)}_{p}$ that appears at level $p$ in \G{i}. 
We can restrict the accessibility test from configurations of $B^{(i)}_{p}$ appearing in graph \G{i}
to pushall stack configurations of $B^{(i+1)}_{q}$. 
Indeed, Lemma \ref{lem:HautGaucheA} ensures that elements of blocks $B_{j}^{(i+1)}$ for $j$ from $p$ to $q-1$ 
are in the same stack at time $t_i$ and at time $t_{i+1}$.
Thus configurations of $B_{j}^{(i+1)}$ for $j$ from $p$ to $q-1$ are restrictions of configurations of $B_{p}^{(i)}$.
We keep the same label in the vertex to encode that those configurations of
$B_{p}^{(i+1)},B_{p+1}^{(i+1)},\ldots, B_{q-1}^{(i+1)}$ come from the same configuration of $B_{p}^{(i)}$
and we build edges between vertices of $B_{j+1}^{(i+1)}$ and $B_{j}^{(i+1)}$ that come from the same configuration of $B_{p}^{(i)}$.
It is because of this case $p=q$ that we have to label configurations in our sorting graph.
Indeed two different stack configurations $c_1$ and $c_2$ of $B_{p}^{(i)}$ may have the same restriction to some block $B_{j}^{(i+1)}$
but not be compatible with the same configurations, thus we want the corresponding vertices of level $j$ of \G{i+1} to be distinct,
that's why we use labels.


More precisely we have the following algorithm.
 
%

\begin{algorithm}[H]
\KwData{$\sigma$ a permutation and \G{i} the sorting graph at step $i$}
\KwResult{{\tt false} if $\sigma_{\leq k_{i+1}}$ is not sortable, the sorting graph $\G{i+1}$ otherwise.}
\Begin{
${\mathcal G}$ an empty sorting graph with $s_{i+1}$ levels\;
${\mathcal G}' \leftarrow ComputeG1(\sigma^{(i+1)})$ (pushall sorting graph of $\sigma^{(i+1)}$) or \Return {\tt false}\;
Copy levels $q+1, \ldots, s_{i+1}$ of ${\mathcal G}'$ into the same levels of ${\mathcal G}$\;
\For{$\gConf{c}{\ell}$ in level $p$ of \G{i}}{
	${\mathcal H}$ the subgraph of \G{i} induced by $\gConf{c}{\ell}$ in levels $< p$\;
	\For{$\gConf{c'}{\ell'}$ in level $q$ of ${\mathcal G}'$}{
		\If{$isAccessible(c,c',\sigma_{|B^{(i)}_{p} \bigcup B^{(i+1)}_{q}})$}{
			 Add $\gConf{c'}{\ell'}$ in level $q$ of ${\mathcal G}$ (if not already done)\;
			 \For{$j$ from $q-1$ downto $p$}{
			 Add $\gConf{c_{|B_{j}^{(i+1)}}}{\ell}$ in level $j$ of ${\mathcal G}$\;
			 Add an edge between $\gConf{c_{|B_{j}^{(i+1)}}}{\ell}$ and $\gConf{c_{|B_{j+1}^{(i+1)}}}{\ell}$ in ${\mathcal G}$.
			 }
			 Merge ${\mathcal H}$ in levels $\leq p$ of ${\mathcal G}$ with $\gConf{c_{|B_{p}^{(i+1)}}}{\ell}$ as origin\;
		}
	}
}
\If{level $q$ of ${\mathcal G}$ is empty}{
	\Return {\tt false}\;
}{
	\For{$\gConf{c'}{\ell'}$ in level $q$ of ${\mathcal G}$}{
	Add all edges from $\gConf{c'}{\ell'}$ to each vertex of level $q+1$ of ${\mathcal G}$;
	}
}
\Return ${\mathcal G}$\;
}
\caption{$iteratepLessThanq(\G{i})$}
\label{algo:stepp<q}
\end{algorithm}

Note that in Algorithm~\ref{algo:stepp<q}, before calling $isAccessible(c,c',\sigma_{|B^{(i)}_{p} \bigcup B^{(i+1)}_{q}})$
we extend configuration $c'$ to $D^{(i)} \bigcup B_{q}^{(i+1)}$ 
by assigning the same stack than in $c$ to points of $D^{(i)} \setminus B_{q}^{(i+1)}$.
This is justified by Lemma~\ref{lem:HautGaucheA}.

\newpage

\subsubsection{Case $p^{(i)} > q^{(i+1)}$}

If $p^{(i)} > q^{(i+1)}$ then $B^{(i)}_{p^{(i)}} \cap A^{(i)} \varsubsetneq B^{(i+1)}_{q^{(i+1)}} \cap A^{(i)}$ (see Figure~\ref{fig:p>q}).

\begin{figure}[ht!]
\begin{center}
\begin{tikzpicture}[scale=.3]
\draw (-2.25,0) -- (8,0);
\draw [opacity=.2] (-2.25,5) -- (13,5);
\draw (8,0) -- (8,16);
\draw [opacity=.2] (13,5) -- (13,16);
\draw [green!60!black](8,0) [fill] circle (4pt);
\draw (9.2,0) node {{\scriptsize $\sigma_{k_i}$}};
\draw [opacity=.2,red] (13,5) [fill] circle (4pt);
\draw [opacity=.2] (13,4) node {{\scriptsize $\sigma_{k_{i+1}}$}};
\draw [green!60!black](6.9,1.1) rectangle +(0.8,-0.8);
\draw [green!60!black] (5.9,2.1) rectangle +(0.8,-0.8);
\draw [red,opacity=.2] (11.9,6.1) rectangle +(0.8,-0.8);
\draw [red,opacity=.2] (10.9,7.1) rectangle +(0.8,-0.8);
\draw [green!60!black] (0,13) rectangle +(0.8,-0.8);
\draw[thick, red, fill=red!20, opacity=.2]  (10.7,7.3) rectangle +(-9.7,4.7);
\draw[red,opacity=.2] (8.85,13.4) node {{\scriptsize \boldmath$B^{(i+1)}_{q^{(i+1)}}$}};
\draw[thick, green!60!black, fill=green!20,opacity=.7](5.7,2.3) rectangle +(-2.5,7.5);
\draw[green!60!black] (2,4) node {{\scriptsize \boldmath$B^{(i)}_{p^{(i)}}$}};
\draw [green!60!black](2.25,10.75) rectangle +(0.8,-0.8);
\draw [green!60!black](1.25,11.75) rectangle +(0.8,-0.8);
\end{tikzpicture}
\begin{tikzpicture}[scale=.3]
\draw [opacity=.2] (-2.25,0) -- (8,0);
\draw (-2.25,5) -- (13,5);
\draw [opacity=.2] (8,0) -- (8,16);
\draw (13,5) -- (13,16);
\draw [green!60!black,opacity=.2](8,0) [fill] circle (4pt);
\draw [opacity=.2] (9.2,0) node {{\scriptsize $\sigma_{k_i}$}};
\draw [red] (13,5) [fill] circle (4pt);
\draw (13,4) node {{\scriptsize $\sigma_{k_{i+1}}$}};
\draw [green!60!black,opacity=.2](6.9,1.1) rectangle +(0.8,-0.8);
\draw [green!60!black,opacity=.2] (5.9,2.1) rectangle +(0.8,-0.8);
\draw [red] (11.9,6.1) rectangle +(0.8,-0.8);
\draw [red] (10.9,7.1) rectangle +(0.8,-0.8);
\draw [red](0,13) rectangle +(0.8,-0.8);
\draw[thick, red, fill=red!20, opacity=.7]  (10.7,7.3) rectangle +(-9.7,4.7);
\draw[red] (8.85,13.4) node {{\scriptsize \boldmath$B^{(i+1)}_{q^{(i+1)}}$}};
\draw[thick, green!60!black, fill=green!20,opacity=.2](5.7,2.3) rectangle +(-2.5,7.5);
\draw[green!60!black,opacity=.2] (2,4) node {{\scriptsize \boldmath$B^{(i)}_{p^{(i)}}$}};
\draw [green!60!black,opacity=.2](2.25,10.75) rectangle +(0.8,-0.8);
\draw [green!60!black,opacity=.2](1.25,11.75) rectangle +(0.8,-0.8);
\end{tikzpicture}
\begin{tikzpicture}[scale=.3]
\draw (-2.25,0) -- (8,0);
\draw (-2.25,5) -- (13,5);
\draw (8,0) -- (8,16);
\draw (13,5) -- (13,16);
\draw [green!60!black](8,0) [fill] circle (4pt);
\draw (9.2,0) node {{\scriptsize $\sigma_{k_i}$}};
\draw [red] (13,5) [fill] circle (4pt);
\draw (13,4) node {{\scriptsize $\sigma_{k_{i+1}}$}};
\draw [green!60!black](6.9,1.1) rectangle +(0.8,-0.8);
\draw [green!60!black] (5.9,2.1) rectangle +(0.8,-0.8);
\draw [red] (11.9,6.1) rectangle +(0.8,-0.8);
\draw [red] (10.9,7.1) rectangle +(0.8,-0.8);
\draw (0,13) rectangle +(0.8,-0.8);
\draw[thick, red, fill=red!20, opacity=.7]  (10.7,7.3) rectangle +(-9.7,4.7);
\draw[red] (8.85,13.4) node {{\scriptsize \boldmath$B^{(i+1)}_{q^{(i+1)}}$}};
\draw[thick, green!60!black, fill=green!20,opacity=.7](5.7,2.3) rectangle +(-2.5,7.5);
\draw[green!60!black] (2,4) node {{\scriptsize \boldmath$B^{(i)}_{p^{(i)}}$}};
\draw [green!60!black](2.25,10.75) rectangle +(0.8,-0.8);
\draw [green!60!black](1.25,11.75) rectangle +(0.8,-0.8);
\end{tikzpicture}
\caption{Block decomposition of $\sigma^{(i)}$ and of $\sigma^{(i+1)}$ when $p^{(i)} > q^{(i+1)}$}
\label{fig:p>q}
\end{center}
\end{figure}

\begin{algorithm}[H]
\KwData{$\sigma$ a permutation and \G{i} the sorting graph at step $i$}
\KwResult{{\tt false} if $\sigma_{\leq k_{i+1}}$ is not sortable, the sorting graph $\G{i+1}$ otherwise}
\Begin{
${\mathcal G}$ an empty sorting graph with $s_{i+1}$ levels\;
${\mathcal G}' \leftarrow ComputeG1(\sigma^{(i+1)})$ (pushall sorting graph of $\sigma^{(i+1)}$) or \Return {\tt false}\;
Copy levels $q+1, \ldots, s_{i+1}$ of ${\mathcal G}'$ into the same levels of ${\mathcal G}$\;
\For{$\gConf{c}{\ell}$ in level $p$ of \G{i}}{
	\For{$\gConf{c'}{\ell'}$ in level $q$ of ${\mathcal G}'$}{
		\If{$isAccessible(c,c',\sigma_{|B^{(i)}_{p} \bigcup B^{(i+1)}_{q}})$}{
			\If{there is a path $\gConf{c}{\ell} \leftrightarrow \gConf{c'_{|B_{p-1}^{(i)}}}{\ell_1} \leftrightarrow \ldots \leftrightarrow \gConf{c'_{|B_{q}^{(i)}}}{\ell_k}$ in \G{i}}{
				Add $\gConf{c'}{\ell'}$ in level $q$ of ${\mathcal G}$ (if not already done)\;
				${\mathcal H}$ the subgraph of \G{i} induced by $\gConf{c'_{|B_{q}^{(i)}}}{\ell_k}$ in levels $< q$\;
				Merge ${\mathcal H}$ in levels $\leq q$ of ${\mathcal G}$ with $\gConf{c'}{\ell'}$ as origin\;
			 }
		}
	}
}
\If{level $q$ of ${\mathcal G}$ is empty}{
	\Return {\tt false}\;
}{
	\For{$\gConf{c'}{\ell'}$ in level $q$ of ${\mathcal G}$}{
	Add all edges from $\gConf{c'}{\ell'}$ to each vertex of level $q+1$ of ${\mathcal G}$;
	}
}
\Return ${\mathcal G}$\;
}
\caption{$iteratepGreaterThanq(\G{i})$ \label{algo:stepp>q}}
\end{algorithm}

This case is very similar to the preceding one except that $B_{p}^{(i)}$ is not cut into pieces but glued together with preceding blocks. 
As a consequence, when testing accessibility of a configuration of $B_{q}^{(i+1)}$, 
we should consider every corresponding configuration in \G{i}, 
that is every configuration obtained by stacking configurations at level $q,q+1,\ldots, p$ in \G{i}. 
Unfortunately this may give an exponential number of configurations, 
but noticing that by Lemma \ref{lem:HautGaucheA} elements of blocks $B_{q}^{(i)},B_{q+1}^{(i)} \ldots B_{p-1}^{(i)}$ 
are exactly in the same stack at time $t_{i}$ and at time $t_{i+1}$, 
it is sufficient to check the accessibility of a pushall configuration $c'$ of $B_{q}^{(i+1)}$ from a configuration $c$ of $B_{p}^{(i)}$ 
and verify afterwards whether the configuration $c$ has ancestors in \G{i} that match exactly the configuration $c'$. 
This leads to the Algorithm~\ref{algo:stepp>q}.
%
%


Note that in Algorithm~\ref{algo:stepp>q}, before calling $isAccessible(c,c',\sigma_{|B^{(i)}_{p} \bigcup B^{(i+1)}_{q}})$
we extend configuration $c$ to $D^{(i)} \bigcup B_{p}^{(i)}$ 
by assigning the same stack than in $c'$ to points of $D^{(i)} \setminus B_{p}^{(i)}$.
This is justified by Lemma~\ref{lem:HautGaucheA}.

Now that we have described all steps of our algorithm, we turn to the study of its complexity.

\section{Complexity Analysis}

In this section we study the complexity of our main algorithm: {\em isSortable($\sigma$)} (Algorithm~\ref{algo:total}).
%
%
%
The key idea for the complexity study relies on a bound of the size of each graph \G{i}, as described in the following lemma.

\begin{lem}\label{lem:boundLevel}
For any $i \in [1 .. r]$, the maximal number of vertices in a level of \G{i} is $9n+2$
where $n$ is the size of the input permutation.
\end{lem}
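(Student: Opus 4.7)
The plan is to proceed by induction on $i \in [1..r]$, using as main engine Theorem 4.4 of \cite{PR13}, which guarantees that an $\ominus$-indecomposable permutation of size $m$ admits at most $9m+2$ pushall stack configurations.

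For the base case $i=1$, Algorithm~\ref{algo:step1} creates exactly one labelled vertex per pushall configuration of each $\ominus$-indecomposable block $B_j^{(1)}$ of $\sigma^{(1)}$. Since $|B_j^{(1)}| \leq n$, the cited bound directly yields $|V_j^{(1)}| \leq 9|B_j^{(1)}|+2 \leq 9n+2$.

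For the inductive step, assume $|V_j^{(i)}| \leq 9n+2$ for every level $j$, and analyse the three subroutines $iteratepEqualsq$, $iteratepLessThanq$, $iteratepGreaterThanq$ that build $\G{i+1}$. Inspecting the pseudo-code, at any level $j$ of $\G{i+1}$ the vertices fall into one of three natural classes, each of which is bounded by $9n+2$: (a) vertices copied verbatim from the auxiliary pushall graph $\mathcal G' = ComputeG1(\sigma^{(i+1)})$ (this is how levels $\geq q$ are populated, and also how the new vertex at level $q$ is created from a $\gConf{c'}{\ell'}$ of $\mathcal G'$) — the base case argument applied to $B_j^{(i+1)}$ gives the bound; (b) vertices imported from a subgraph $\mathcal H$ of $\G{i}$ (this is how levels $<\min(p,q)$ are populated in all three cases, and how levels $<q$ are populated when $p>q$) — their label set is a subset of a level of $\G{i}$ and is therefore bounded by the induction hypothesis; (c), arising only when $p<q$ at the fresh levels $j \in [p,q-1]$, vertices of the form $\gConf{c_{|B_j^{(i+1)}}}{\ell}$ whose label $\ell$ is borrowed from the triggering $\G{i}$-vertex at level $p$, so that the distinctness of labels inside a level (part of the sorting-graph invariant) forces their count to be at most $|V_p^{(i)}| \leq 9n+2$.

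The only delicate point, and the main obstacle I anticipate, lies in class (c): one must verify, from the pseudo-code of $iteratepLessThanq$ (Algorithm~\ref{algo:stepp<q}), that the label attached to each newly created vertex at levels $p,\ldots,q-1$ is genuinely inherited from a level-$p$ vertex of $\G{i}$ rather than freshly generated, so that the injection into $V_p^{(i)}$ is well defined. Once this invariant is extracted, a routine case analysis against the three subroutines confirms that no level of $\G{i+1}$ exceeds $9n+2$ vertices, closing the induction.
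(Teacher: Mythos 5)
Your proof is correct and follows essentially the same route as the paper: induction on $i$, with the base case and the levels $\geq q^{(i+1)}$ handled by the bound $9|\pi|+2$ from Theorem 4.4 of \cite{PR13}, and the remaining levels bounded by the induction hypothesis via inheritance from \G{i}. Your class (c) analysis (injecting the fresh levels $p,\ldots,q-1$ into $V_p^{(i)}$ through the inherited labels) is in fact slightly more precise than the paper's corresponding sentence, which loosely says these vertices are restrictions of vertices ``of the level $j$'' of \G{i}, but the argument and the bound are the same.
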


\begin{proof}
From Theorem 4.4 of \cite{PR13}, 
the maximal number of pushall stack configurations of a $\ominus$-indecomposable permutation $\pi$ is $9|\pi|+2$. 

By definition of \G{1}, the vertices of a level correspond to 
pushall stack configurations of a given block of the $\oplus_{1}$-decomposition of the input permutation $\sigma$.
Thus the cardinality of a level is bounded by $9k+2$ where $k$ is the size of the corresponding block.
As $k \leq n$, the result holds for $i = 1$ . 

Suppose now that the result is true for a given \G{i}, we show that it is then true for \G{i+1}.
The graph \G{i+1} is build from \G{i} using Algorithm~\ref{algo:steppq}, \ref{algo:stepp<q} or \ref{algo:stepp>q}.
In each case for a level $j$ of \G{i+1} we have:

If $j> q^{(i+1)}$ then vertices of the level $j$ of \G{i+1} are the pushall stack configurations 
corresponding to the block $B_j^{(i+1)}$ of the $\oplus_{i+1}$-decomposition of $\sigma$.
Thus Theorem 4.4 of \cite{PR13} ensures that the cardinality of level $j$ is bounded by $9n+2$.

If $j = q^{(i+1)}$ then vertices of the level $j$ of \G{i+1} are a subset of the pushall stack configurations 
corresponding to the block $B_j^{(i+1)}$ of the $\oplus_{i+1}$-decomposition of $\sigma$.
Again Theorem 4.4 of \cite{PR13} ensures that the cardinality of level $j$ is bounded by $9n+2$.

If $j < p^{(i)}$ then vertices of the level $j$ of \G{i+1} are a subset of vertices of the level $j$ of \G{i}.
By induction hypothesis the cardinality of level $j$ is bounded by $9n+2$.

If $p^{(i)} \leq j < q^{(i+1)}$ then vertices of the level $j$ of \G{i+1} are restrictions of a subset of vertices of the level $j$ of \G{i}.
By induction hypothesis the cardinality of level $j$ is bounded by $9n+2$, concluding the proof.
\end{proof}

\begin{lem}\label{lem:boundGraph}
For any $i \in [1 .. r]$, the number of vertices of \G{i} is ${\mathcal O}(n^2)$
and the number of edges of \G{i} is ${\mathcal O}(n^3)$,
where $n$ is the size of the input permutation.
\end{lem}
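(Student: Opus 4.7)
The plan is to combine Lemma~\ref{lem:boundLevel} with the fact that the number of levels of $\G{i}$ is linear in $n$ and the fact (from the definition of a sorting graph on p.\pageref{def:sortingGraph}) that edges only occur between adjacent levels.

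First I would bound the number of levels. The graph $\G{i}$ has $s_i$ levels, one per block of the $\ominus$-decomposition $\sigma^{(i)} = \ominus[B_1^{(i)}, \ldots, B_{s_i}^{(i)}]$. Since the blocks $B_j^{(i)}$ are disjoint subsets of $\sigma$ and each contains at least one element, we have $s_i \leq |\sigma^{(i)}| \leq n$. Combined with Lemma~\ref{lem:boundLevel}, which bounds $|V_j^{(i)}| \leq 9n+2$ for every level, this immediately yields a total vertex count of
\[
|V(\G{i})| = \sum_{j=1}^{s_i} |V_j^{(i)}| \leq s_i \cdot (9n+2) \leq n \cdot (9n+2) = {\mathcal O}(n^2).
\]

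For the edges, the defining property of a sorting graph states that edges occur only between consecutive levels $V_j^{(i)}$ and $V_{j+1}^{(i)}$. Hence the number of edges is bounded above by the sum, over $j$ from $1$ to $s_i - 1$, of $|V_j^{(i)}| \cdot |V_{j+1}^{(i)}|$. Applying Lemma~\ref{lem:boundLevel} to each factor gives
\[
|E(\G{i})| \leq \sum_{j=1}^{s_i-1} |V_j^{(i)}| \cdot |V_{j+1}^{(i)}| \leq (s_i - 1) (9n+2)^2 \leq n \cdot (9n+2)^2 = {\mathcal O}(n^3),
\]
which is the claimed bound.

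There is no real obstacle here: the entire argument reduces to Lemma~\ref{lem:boundLevel} together with the trivial bound $s_i \leq n$ and the locality of edges in a sorting graph. The only thing worth double-checking is that the definition of sorting graph really does restrict edges to adjacent levels (which it does, as listed in the bullet points on p.\pageref{def:sortingGraph}), and that $s_i \leq n$ holds for every $i$, which follows because the blocks $B_j^{(i)}$ of $\sigma^{(i)}$ form a partition of a subset of the values of $\sigma$.
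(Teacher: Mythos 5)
Your proof is correct and follows exactly the paper's (much terser) argument: Lemma~\ref{lem:boundLevel} bounds each level by $9n+2$, there are at most $n$ levels, and edges occur only between consecutive levels, giving ${\mathcal O}(n^2)$ vertices and ${\mathcal O}(n^3)$ edges. You have simply written out the arithmetic that the paper leaves implicit.
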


\begin{proof}
The result follows from Lemma~\ref{lem:boundLevel} as there are at most $n$ levels and there are edges only between consecutives levels. 
\end{proof}

\begin{thm}
Given a permutation $\sigma$, Algorithm \ref{algo:total} {\em isSortable($\sigma$)} decides 
whether $\sigma$ is sortable with two stacks in series in polynomial time w.r.t.~$|\sigma|$.
\end{thm}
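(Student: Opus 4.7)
The plan is to combine the structural bounds on the sorting graphs \G{i} (Lemma~\ref{lem:boundGraph}) with the per-step linear cost of \emph{isAccessible} (Lemma~\ref{lem:compatibleRectangle}) and then to multiply through the $r \leq n$ outer iterations of Algorithm~\ref{algo:total}. First I would note that Algorithm~\ref{algo:total} consists of one call to $ComputeG1$ followed by at most $r-1 \leq n-1$ calls to one of the three iteration subroutines. So the total running time is bounded by $O(n)$ times the maximum cost of building the graph $\G{i+1}$ from $\G{i}$ (plus the cost of $ComputeG1$, which I will reduce to the same analysis).

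The core estimate I would establish is that each of the subroutines $iteratepEqualsq$, $iteratepLessThanq$ and $iteratepGreaterThanq$ runs in time polynomial in $n$. The outer double loop in each of these procedures iterates over pairs of vertices $(\gConf{c}{\ell},\gConf{c'}{\ell'})$ with $\gConf{c}{\ell}$ in level $p^{(i)}$ of \G{i} and $\gConf{c'}{\ell'}$ in level $q^{(i+1)}$ of ${\mathcal G}'$. By Lemma~\ref{lem:boundLevel}, each level contains at most $9n+2$ vertices, so there are at most $(9n+2)^2 = O(n^2)$ such pairs. For each pair we invoke $isAccessible$, which by Lemma~\ref{lem:compatibleRectangle} runs in time $O(n)$ (applied to the restricted permutation $\sigma_{|B^{(i)}_{p}\cup B^{(i+1)}_{q}}$ of size at most $n$). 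Computing the auxiliary graph ${\mathcal G}' = ComputeG1(\sigma^{(i+1)})$ consists of computing $pushallConfigs(B_{j}^{(i+1)})$ for each block, which is polynomial by the constructive description of Proposition~4.8 of \cite{PR13}.

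The step that requires care is the bookkeeping on the graph (the ``Merge ${\mathcal H}$'' operation, adding edges, restricting configurations to sub-blocks, and in $iteratepGreaterThanq$ the search for a matching ancestor path $\gConf{c}{\ell} \leftrightarrow \gConf{c'_{|B_{p-1}^{(i)}}}{\ell_1} \leftrightarrow \ldots \leftrightarrow \gConf{c'_{|B_{q}^{(i)}}}{\ell_k}$). The key point is that Lemma~\ref{lem:boundGraph} bounds the current graph by $O(n^2)$ vertices and $O(n^3)$ edges, and by Lemma~\ref{lem:HautGaucheA} the matching path is uniquely determined by the labels $\ell_j$ it must carry on each level between $q^{(i+1)}$ and $p^{(i)}-1$; hence looking for such a path reduces to examining at most $O(n)$ candidate neighbours per level, which fits into $O(n^2)$ per pair $(c,c')$. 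Adding a restricted vertex $\gConf{c_{|B_{j}^{(i+1)}}}{\ell}$ and the corresponding edges at each level costs $O(n)$ per pair as well. Altogether each iteration subroutine runs in time $O(n^2 \cdot n^2) = O(n^4)$, plus the $poly(n)$ cost of $ComputeG1(\sigma^{(i+1)})$.

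Multiplying by the at most $n$ outer iterations gives a total running time polynomial in $n$, which is what is claimed. The main obstacle I expect is not the arithmetic of these bounds but carefully justifying that the graph bookkeeping per iteration stays polynomial in the case $p^{(i)} > q^{(i+1)}$, where a single accepted pair $(c,c')$ may in principle correspond to many ancestor paths in \G{i}; here one really needs to exploit Lemma~\ref{lem:HautGaucheA}, which pins down the labels along the path, to avoid an exponential blow-up.
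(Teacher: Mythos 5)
Your proposal follows essentially the same route as the paper's (much terser) proof: bound the number of loop iterations per subroutine via Lemma~\ref{lem:boundLevel}, bound the cost of each \emph{isAccessible} call via Lemma~\ref{lem:compatibleRectangle}, bound the graph bookkeeping via Lemma~\ref{lem:boundGraph}, and multiply by the $r\leq n$ outer iterations. Your more careful accounting of the merge and ancestor-path operations is sound and is consistent with the paper's closing remark that a finer analysis yields ${\mathcal O}(n^{5})$ overall.
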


\begin{proof}
Algorithm \ref{algo:total} involves four other subroutines: 
{\em ComputeG1} (Algorithm~\ref{algo:step1}), {\em iteratepEqualsq} (Algorithm~\ref{algo:steppq}), 
{\em iteratepLessThanq} (Algorithm~\ref{algo:stepp<q}) and {\em iteratepGreaterThanq} (Algorithm~\ref{algo:stepp>q}). 

Each for-loop in these algorithms is executed at most a linear number of time by Lemma \ref{lem:boundLevel}.

Moreover each included operation is polynomial by Lemmas~\ref{lem:boundGraph} and \ref{lem:compatibleRectangle}.
%
\end{proof}

Notice that a more precise analysis of complexity leads to an overall complexity of ${\mathcal O}(n^{5})$.

\bibliographystyle{habbrv}
\bibliography{biblio}
\end{document}